\DeclareFontFamily{U}{rsfs}{%
\skewchar\font127}
\DeclareFontShape{U}{rsfs}{m}{n}{%
<-6>rsfs5<6-8.5>rsfs7<8.5->rsfs10}{}
\DeclareSymbolFont{rsfs}{U}{rsfs}{m}{n}
\DeclareRobustCommand*\rsfs{%
\@fontswitch\relax\mathrsfs}
\theoremstyle{plain}
\newtheorem{thm}{Theorem}[section]
\newtheorem{prop}[thm]{Proposition}
\newtheorem{lem}[thm]{Lemma}
\newtheorem{defi}[thm]{Definition}
\newtheorem{rmk}[thm]{Remark}
\newtheorem{nota}[thm]{Notation}
\newtheorem{prop-defi}[thm]{Proposition-Definition}
\newtheorem{thm-defi}[thm]{Theorem-Definition}
\newtheorem{lem-defi}[thm]{Lemma-Definition}
\newtheorem{conj}[thm]{Conjecture}
\newdimen\argwidth
\def\db[#1\db]{
 \setbox0=\hbox{$#1$}\argwidth=\wd0
 \setbox0=\hbox{$\left[\box0\right]$}
  \advance\argwidth by -\wd0
 \left[\
 n.3\argwidth\box0 \kern.3\argwidth\right]}
\newcommand{\SX}{\scriptscriptstyle  X}
\newcommand{\SY}{\scriptscriptstyle  Y}
\newcommand{\HX}{\widehat{X}}
\newcommand{\HY}{\widehat{Y}}
\newcommand{\SHX}{\scriptscriptstyle  \widehat{X} }
\newcommand{\SHY}{\scriptscriptstyle  \widehat{Y}}
\newcommand{\HL}{\widehat{L}}
\newcommand{\lx}{\ell_{\scriptscriptstyle{X}} }
\newcommand{\ly}{\ell_{\scriptscriptstyle{Y}}}
\newcommand{\lhx}{\ell_{\scriptscriptstyle  \widehat{X}} }
\newcommand{\lhy}{\ell_{\scriptscriptstyle  \widehat{Y}}}
\newcommand{\SH}{\operatorname{\scriptscriptstyle{H}}}
\newcommand{\calExt}{\operatorname{\mathcal{E}\textit{xt}}}
\newcommand{\Adiag}{\operatorname{Adiag}}
\newcommand{\alg}{\operatorname{\scriptstyle{alg}}}
\newcommand{\calHom}{\operatorname{\mathcal{H}\textit{om}}}
\newcommand{\BL}{\operatorname{\mathbf{L}}}
\newcommand{\HPsi}{\widehat{\Psi}}
\newcommand{\aA}{\mathcal{A}}
\newcommand{\bB}{\mathcal{B}}
\newcommand{\cC}{\mathcal{C}}
\newcommand{\CC}{\mathbb{C}}
\newcommand{\dD}{\mathcal{D}}
\newcommand{\eE}{\mathcal{E}}
\newcommand{\fF}{\mathcal{F}}
\newcommand{\oO}{\mathcal{O}}
\newcommand{\pP}{\mathcal{P}}
\newcommand{\QQ}{\mathbb{Q}}
\newcommand{\RR}{\mathbb{R}}
\newcommand{\sS}{\mathcal{S}}
\newcommand{\tT}{\mathcal{T}}
\newcommand{\ZZ}{\mathbb{Z}}
\newcommand{\Supp}{\mathop{\rm Supp}\nolimits}
\newcommand{\Hom}{\mathop{\rm Hom}\nolimits}
\newcommand{\dR}{\mathbf{R}}
\newcommand{\NS}{\mathop{\rm NS}\nolimits}
\newcommand{\Pic}{\mathop{\rm Pic}\nolimits}
\newcommand{\ch}{\mathop{\rm ch}\nolimits}
\newcommand{\rk}{\mathop{\rm rk}\nolimits}
\newcommand{\Coh}{\mathop{\rm Coh}\nolimits}
\newcommand{\cneq}{\mathrel{\raise.095ex\hbox{:}\mkern-4.2mu=}}
\newcommand{\eqcn}{\mathrel{=\mkern-4.5mu\raise.095ex\hbox{:}}}
\newcommand{\End}{\mathop{\rm End}\nolimits}
\newcommand{\Imm}{\operatorname{Im}}
\newcommand{\Ree}{\operatorname{Re}}
\newcommand{\cl}{\mathop{\rm cl}\nolimits}
\begin{document}

\title[Fourier-Mukai transforms and stability conditions]{Fourier-Mukai transforms and stability conditions on Abelian Varieties}

\date{\today}

\author{Dulip Piyaratne}

\address{Kavli Institute for the Physics and Mathematics of the Universe (WPI)\\ The University of Tokyo Institutes for Advanced Study \\ The University of Tokyo \\ Kashiwa \\ Chiba 277-8583 \\ Japan.}

\email{dulip.piyaratne@ipmu.jp}

\subjclass[2010]{Primary 14F05; Secondary 14J30, 14J32, 14J60, 14K99, 	18E10, 18E30,  18E40}

\keywords{Fourier-Mukai transforms, Abelian varieties,  Derived category, Polarization, Bridgeland stability conditions, Bogomolov-Gieseker inequality}

\begin{abstract}
This article is based on a talk given at the  Kinosaki Symposium on Algebraic Geometry in 2015, about a 
work in progress \cite{Piy2}. We describe a polarization on a derived equivalent abelian variety by using Fourier-Mukai theory. We explicitly formulate a conjecture which says certain Fourier-Mukai transforms between derived categories give equivalences of some hearts of Bridgeland stability conditions. We establish it for abelian surfaces, which is already known due to D. Huybrechts, and for abelian 3-folds. This generalizes the author's previous joint work \cite{MP1, MP2}  with A. Maciocia on principally polarized abelian 3-folds with Picard rank one. Consequently, we see that the strong Bogomolov-Gieseker type inequalities hold for tilt stable objects on abelian 3-folds. 
\end{abstract}

\maketitle
\section{Introduction}
The notion of Fourier-Mukai transform  was introduced by
Mukai in early 1980s (see \cite{Muk2}). In particular, he showed that the Poincar\'e bundle induces a
non-trivial equivalence between the derived categories of an abelian variety and its
dual variety. 
Furthermore, he studied certain type of vector bundles on abelian varieties called 
semihomogeneous bundles, and moduli of them (see \cite{Muk1}).
In particular, the moduli space parametrizing simple semihomogeneous bundles on an abelian variety $Y$ with a fixed Chern character is also an abelian variety, denoted by $X$. Moreover, the associated universal bundle $\eE$ on $X \times Y$ induces a derived equivalence $\Phi_{\eE}^{\SX \to \SY}$ from $X$ to $Y$, which is commonly known as the Fourier-Mukai transform. 
This transform induces a linear isomorphism $\Phi_{\eE}^{\SH}$ from
$H^{2*}_{\alg}(X,\QQ)$ to $H^{2*}_{\alg}(Y,\QQ)$, called the cohomological Fourier-Mukai transform.
In this article, we realize this linear isomorphism in anti-diagonal form with respect to some twisted Chern characters (see  Theorem \ref{antidiagonalrep}). This generalizes  similar work on principally polarized abelian varieties with Picard rank one case (see \cite{MP2}).
The following is one of the important consequences of this anti-diagonal representation, which generalizes the known result \cite{BL1} for the classical Fourier-Mukai transform with kernel the Poincar\'e bundle.
\begin{thm}[\,= \ref{polarize}\,]
For  $a \in X$, $b \in Y$, let  $\Xi : D^b(X) \to D^b(Y)$ be the Fourier-Mukai functor defined by 
$$
\Xi = \eE_{\{a\}\times Y}^* \circ \Phi_\eE^{\SX  \to \SY} \circ \eE_{X \times \{b\}}^*,
$$
where $\eE_{\{a\}\times Y}$ denotes the functor $\eE_{\{a\}\times Y} \otimes (-)$ and similar for $ \eE_{X \times \{b\}}$. 
If the ample line bundle $L$ defines a polarization on $X$,  then
the ample line bundle $\det (\Xi(L))^{-1}$ defines a polarization on $Y$.
\end{thm}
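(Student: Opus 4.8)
The plan is to reduce the ampleness assertion to an explicit computation of the first Chern class $\ch_1(\Xi(L))$ and then to identify this class, up to sign, with a polarization on $Y$. Since $\det(\Xi(L))^{-1}$ is a line bundle with $c_1(\det(\Xi(L))^{-1}) = -\ch_1(\Xi(L))$, and since a line bundle on an abelian variety is ample precisely when its first Chern class is represented by a positive-definite Hermitian form, it suffices to prove that $-\ch_1(\Xi(L))$ is such a class. First I would pass to $H^{2*}_{\alg}(Y,\QQ)$ and write $\ch_1(\Xi(L))$ as the degree-$2$ component of $\Phi_\eE^{\SH}$ applied to the twisted Chern character of $L$, absorbing the twisting functors $\eE_{\{a\}\times Y}^*$ and $\eE_{X\times\{b\}}^*$ into the twisted Chern characters exactly as in the anti-diagonal set-up of Theorem \ref{antidiagonalrep}.

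Next I would invoke the anti-diagonal representation of Theorem \ref{antidiagonalrep}. Because $L$ is a line bundle we have $\ch(L) = e^{c_1(L)}$, so its twisted Chern character is again the exponential of an ample class and its graded pieces are the normalized powers $c_1(L)^i/i!$. The anti-diagonal form of the cohomological transform reverses the grading, so the degree-$2$ part of $\Xi(L)$ is produced from the top-but-one piece $c_1(L)^{n-1}/(n-1)!$ (together with a correction coming from the twist), up to an explicit nonzero scalar, where $n = \dim X = \dim Y$. This is the step where the structure of $\Phi_\eE^{\SH}$ does the real work, converting the $(n-1)$-fold self-intersection of the polarization on $X$ into a degree-$2$ class on $Y$.

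Then I would identify the resulting class. Classically, for a polarization $\ell$ on an $n$-dimensional abelian variety the class $\ell^{n-1}/(n-1)!$ corresponds, via the polarization isogeny, to the dual polarization, which is ample; the anti-diagonal transform transports this picture to $Y$. I would check that the accompanying scalar is positive and that the sign introduced by passing to $\det(\cdot)^{-1}$ is exactly the one needed, so that $-\ch_1(\Xi(L))$ lands in the ample cone. In the classical Poincar\'e-kernel case this should recover the statement of \cite{BL1}, which serves as a consistency check on the normalizations and signs.

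The main obstacle will be the last step: the anti-diagonal representation yields only a cohomological (numerical) identity, whereas defining a polarization is the genuinely positive condition that the Hermitian form attached to $-\ch_1(\Xi(L))$ be positive definite. To bridge this gap I would argue that the anti-diagonal transform carries the ample cone of $X$ into the negative of the ample cone of $Y$: since $L$ varies in the open, connected ample cone, it is enough to show the image class is everywhere nondegenerate and positive-definite at a single point, whence positivity propagates by continuity and the openness of the positive-definiteness condition. Pinning the image down as the explicit dual polarization class, whose positivity is already known, closes this argument and completes the proof.
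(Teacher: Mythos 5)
Your reduction of the statement to showing that $-\ch_1(\Xi(L))$ is a positive class is fine, and you have correctly located the crux: positivity is not a numerical consequence of the cohomological formulas. But your proposed route does not close that gap, for two concrete reasons. First, Theorem \ref{antidiagonalrep} cannot be your computational engine without circularity: as stated it is formulated in terms of the class $\ly$ produced by Theorem \ref{generalcohomoFMT}, whose ampleness is exactly what Theorem \ref{polarize} asserts (the two theorems are deduced simultaneously in the paper from the same computation). If instead you use only the ``numerical content'' of the anti-diagonal representation, it returns the intersection numbers $v_i^{D_{\SY},\ly}(\Xi(L)) = i!\,\ly^{g-i}\cdot\ch_i^{D_{\SY}}(\Xi(L))$, not the cohomology class $\ch_1(\Xi(L))$ itself; in particular it does not even give you the structural fact, needed before any positivity discussion can start, that $\ch(\Xi(L))$ has the exponential shape $c\,e^{-\ly}$ for some $\ly \in \NS_{\QQ}(Y)$.

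Second, and more seriously, your connectedness/signature argument needs an anchor: one ample $\lx$ whose image class is known to be positive. Your proposed anchor, ``the explicit dual polarization class, whose positivity is already known,'' exists only for the Poincar\'e kernel on $X \times \HX$ (this is \cite{BL1}, i.e.\ Lemma \ref{classicalcohomoFMT}); for a general semihomogeneous kernel $\eE$ on $X \times Y$ the varieties $X$ and $Y$ are not dual to one another and no point of positivity is known a priori, so continuity has nothing to propagate. The paper supplies precisely this missing input by factoring $\Xi = \Phi_{\Sigma^*\pP^\vee}^{\SHY \to \SY} \circ \Gamma$ through the dual abelian variety $\HY$: the transform $\Gamma$ has a sheaf kernel in $\Coh_g(X \times \HY)$, sends $\Coh_i(X)$ into $\Coh_i(\HY)$, sends skyscrapers to zero-dimensional sheaves and $\oO_X$ to a homogeneous bundle, and these geometric facts feed the Nakai--Moishezon criterion to prove that the intermediate class $\lhy$ defined by $\Gamma^{\SH}(e^{\lx}) = r\,e^{\lhy}$ is ample; the classical result \cite{BL1} applied to the remaining Poincar\'e factor then yields $\Xi^{\SH}(e^{\lx}) = (r^3\lx^g/g!)\,e^{-\ly}$ with $\ly$ ample, from which the statement about $\det(\Xi(L))^{-1}$ is immediate. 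Your signature-constancy idea is sound as a principle (nondegeneracy on the whole ample cone does follow from $(\lx^g/g!)(\ly^g/g!) = 1/r^2$), but without the factorization through $\HY$ --- or some substitute positivity argument such as a direct Nakai--Moishezon analysis --- the proof cannot get started.
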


Motivated by Douglas's work on $\Pi$-stability for D-branes, Bridgeland introduced the notion of stability conditions on triangulated categories (see \cite{BriStab}). This categorical stability notion  can be interpreted essentially as an abstraction of the usual slope stability for sheaves on projective curves. 
The category of coherent sheaves does not arise as a heart of a Bridgeland stability condition for higher dimensional smooth projective varieties. So more work is needed to construct the hearts for stability conditions on projective varieties of dimension above one. In general, when $B +i\omega$ is a complexified ample class on a projective variety $X$, it is expected that 
$Z_{B+i\omega}^{\SX}(-) = -\int_X e^{-B - i\omega}\ch(-)$ defines 
a central charge function of some stability condition on $X$. 
In this article we conjecturally construct a heart for this central charge function by using the  notion of
 very weak stability condition (see Conjecture \ref{conjstab}).  This essentially generalizes  the  single tilting construction of Bridgeland and Arcara-Bertram for surfaces, and the conjectural double tilting construction of Bayer-Macr\`i-Toda for 3-folds.

Action of the Fourier-Mukai transform $\Phi_{\eE}^{\SX \to \SY}$ induces stability conditions on $D^b(Y)$ from the ones on $D^b(X)$.
This can be defined via the induced the map on $\Hom(K(Y), \CC)$ from $\Hom(K(X), \CC)$ by the transform.
For abelian varieties we view this as  
$\Phi_{\eE}^{\SX \to \SY} \cdot Z_{\Omega}^{\SX} = \zeta \, Z_{\Omega'}^{\SY}$ 
for some $\zeta \in \CC$, where $\Omega, \Omega'$ are complexified ample classes on $X, Y$ respectively.  When $\zeta$ is real one can expect that the Fourier-Mukai transform $\Phi_{\eE}^{\SX \to \SY}$   gives an equivalence of some hearts of particular stability conditions on $X$ and $Y$, whose $\Omega$ and $\Omega'$ are determined by $\Imm \zeta  = 0$. More precisely we formulate the following:

\begin{conj}[\,= \ref{conjequivalence}\,]
\label{conjequiintro}
The Fourier-Mukai transform $\Phi_{\eE}^{\SX \to \SY}: D^b(X) \to D^b(Y)$ between the abelian varieties gives the equivalence of stability condition hearts each of which are $g$-fold tilts as  in Conjecture \ref{conjstab}:
$$
\Phi_{\eE}^{\SX \to \SY} [k]  \left(  \aA^{\SX}_{\Omega_k}  \right) = \aA^{\SY}_{\Omega'_k}.
$$
Here  $\Omega_k, \Omega_k'$
are some complexified ample classes on $X, Y$ respectively,  $1 \le k \le  (g-1)$, and $g= \dim X = \dim Y$.
\end{conj}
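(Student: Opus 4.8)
The plan is to establish the stated equality (in the cases the paper settles, namely $g=2$ and $g=3$) by splitting it into a numerical part, controlled by the anti-diagonal representation, and a categorical part, controlled by a single inclusion of hearts. First I would compute the action of $\Phi_{\eE}^{\SX \to \SY}$ on central charges: writing $\Phi_{\eE}^{\SX \to \SY}\cdot Z^{\SX}_{\Omega} = \zeta\, Z^{\SY}_{\Omega'}$ on $H^{2*}_{\alg}$, the anti-diagonal shape of $\Phi_{\eE}^{\SH}$ from Theorem \ref{antidiagonalrep} makes $\zeta$ an explicit function of $\Omega$, and the equation $\Imm \zeta = 0$ carves out the real one-parameter family along which $\Omega'_k$ is determined by $\Omega_k$. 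That each resulting $\Omega'_k$ is genuinely a complexified ample class, rather than a merely formal cohomology class, is exactly the content of the polarization transfer in Theorem \ref{polarize}. The outcome of this step is that, for each $k$, the functor $\Phi_{\eE}^{\SX \to \SY}[k]$ intertwines $Z^{\SX}_{\Omega_k}$ with a positive real multiple of $Z^{\SY}_{\Omega'_k}$, so the two slicings already agree after applying $K(X)\otimes\CC \to K(Y)\otimes\CC$.

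It then suffices to prove the single inclusion $\Phi_{\eE}^{\SX \to \SY}[k]\big(\aA^{\SX}_{\Omega_k}\big) \subseteq \aA^{\SY}_{\Omega'_k}$: since the left-hand side is itself the heart of a bounded t-structure (the image of a heart under an exact equivalence), and a heart contained in another heart of a bounded t-structure must coincide with it, the inclusion forces the asserted equality. I would establish the inclusion by induction on the tilting level of Conjecture \ref{conjstab}. For the surface case $g = 2$ only $k = 1$ occurs: here $\aA^{\SX}_{\Omega_1}$ is a single tilt of $\Coh(X)$, one checks that $\Phi[1]$ carries the defining torsion pair across, and the classical Bogomolov-Gieseker inequality controlling the slopes of $\mu$-semistable sheaves suffices, recovering Huybrechts's theorem. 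For $g = 3$ I would treat the single tilt $k = 1$ first and then feed it into the double tilt $k = 2$, mirroring the layered structure of the construction.

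The hard part will be the cohomological amplitude control at the top level $k = g-1$. A priori $\Phi[k]$ may spread an object of $\aA^{\SX}_{\Omega_k}$ over several degrees of the target t-structure, so the inclusion is false until one shows that $\Phi[k]$ — and, symmetrically, the inverse Fourier-Mukai transform, itself a transform between abelian varieties carrying its own anti-diagonal representation — lands $\aA^{\SX}_{\Omega_k}$ in the narrow window $\big\langle \aA^{\SY}_{\Omega'_k},\, \aA^{\SY}_{\Omega'_k}[1]\big\rangle$, after which the phase constraint from the first step eliminates the spurious degree. Pinning down this window requires a Bogomolov-Gieseker type bound on the Chern characters of $\Omega_k$-tilt-stable objects; for threefolds this is the \emph{strong}, codimension-three inequality controlling $\ch_3$ in terms of $\ch_0,\ch_1,\ch_2$, i.e.\ the Bayer-Macr\`i-Toda inequality. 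On abelian threefolds this inequality and the heart correspondence are logically intertwined — the amplitude bound needed for the inclusion is essentially equivalent to the strong inequality — so that establishing the correspondence and establishing the inequality become a single task. I expect the genuine difficulty to be concentrated here: proving the strong inequality on abelian threefolds by transporting a candidate destabilizing object through $\Phi$ to a locus (for instance of transformed semihomogeneous sheaves) where the Chern-character bound can be verified directly.
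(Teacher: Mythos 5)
Your overall architecture coincides with the paper's: the numerical step (computing the scalar $\zeta$ via the anti-diagonal representation of Theorem \ref{antidiagonalrep}, with ampleness of the transformed class supplied by Theorem \ref{polarize} and the reality condition $\Imm \zeta = 0$ picking out the classes $\Omega_k, \Omega'_k$) is exactly Section 4 of the paper, and your categorical step proceeds, as in the paper, level-by-level through the tilting hierarchy of Conjecture \ref{conjstab}. Writing $\Psi = \Phi_{\eE}^{\SX \to \SY}$, the paper first shows that the first-tilt hearts satisfy $\Psi(\bB^{\SX}) \subset \langle \bB^{\SY}, \bB^{\SY}[-1], \bB^{\SY}[-2]\rangle$, so that they behave under $\Psi$ like sheaves on an abelian surface, and then proves the torsion-pair-wise inclusions $\Psi(\tT^{\SX}_2) \subset \langle \fF^{\SY}_2, \tT^{\SY}_2[-1]\rangle$ and $\Psi(\fF^{\SX}_2) \subset \langle \fF^{\SY}_2[-1], \tT^{\SY}_2[-2]\rangle$, which immediately give $\Psi[1](\aA^{\SX}) \subset \aA^{\SY}$; this is your induction on the tilting level. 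Your reduction to a single inclusion (the image of a heart under an exact equivalence is again a heart of a bounded t-structure, and nested hearts coincide) is valid and marginally cleaner than the paper's presentation, which records the symmetric inclusions for $\HPsi$ as well.

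The one place where your plan deviates, and where it would break if taken literally, is the claim that the amplitude control at the top level \emph{requires} the strong codimension-three inequality \eqref{BGineq}, so that correspondence and inequality are ``essentially equivalent'' and must be settled as a single task. In the paper (following \cite{MP1, MP2, Piy}) the logical order is strictly one-way: the inclusions above are established using only the Mukai spectral sequence \eqref{mukaispecseq}, sheaf-theoretic properties of the semihomogeneous kernel, and the classical codimension-two Bogomolov--Gieseker inequality for sheaves --- the same input that makes the very weak stability condition $\sigma_2$ and hence the heart $\aA^{(3)}_{B+i\omega}$ exist at all (Remark \ref{remarkconjstab}). The strong inequality \eqref{BGineq} appears nowhere as an input; it is deduced \emph{afterwards}, as a corollary of the established equivalence $\Psi[1](\aA^{\SX}) = \aA^{\SY}$, by transporting a $Z^{(2)}$-stable object $E$ with $\Ree Z^{(2)}(E) = 0$ through $\Psi$ and reading off the bound on $\ch_3(E)$. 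If your amplitude bound genuinely presupposed \eqref{BGineq}, the argument would be circular, since on abelian 3-folds that inequality has no independent proof --- it is precisely the second assertion of Theorem \ref{equivalence3fold}. So the ``single task'' must be disentangled: prove the window and the torsion-pair inclusions by Fourier--Mukai arguments alone, and only then harvest the inequality.
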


In the last section we establish the above conjecture for abelian surfaces and abelian 3-folds. In particular, we explicitly give the details for the surface case and our proof is essentially a generalization of Yoshioka's in \cite{Yos}, and closely follow the one in the author's PhD thesis \cite[Chapter 6]{Piy}. The equivalences of such stability condition hearts are already known for a principally polarized abelian 3-folds with Picard rank one. Since we have the anti-diagonal representation of cohomological Fourier-Mukai transform as in Theorem \ref{antidiagonalrep}, essentially the same arguments work in the general case. In particular, we deduce the following:
\begin{thm}[\,= \ref{equivalence3fold}\,]
Conjecture \ref{conjequiintro} is true for abelian 3-folds. Moreover, the strong Bogomolov-Gieseker type inequality introduced in \cite{BMT} holds for abelian 3-folds.
\end{thm}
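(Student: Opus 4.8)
The plan is to exploit the anti-diagonal representation of the cohomological Fourier--Mukai transform (Theorem \ref{antidiagonalrep}) in order to reduce the general abelian $3$-fold case to the computational framework already developed for principally polarized abelian $3$-folds of Picard rank one in \cite{MP1, MP2}. First I would fix the double-tilted hearts $\aA^{\SX}_{\Omega_k}$ produced by the very weak stability construction of Conjecture \ref{conjstab}; for $g=3$ these are the two hearts $(k=1,2)$ obtained by tilting $\Coh(X)$ twice. Using the anti-diagonal form of $\Phi_{\eE}^{\SH}$ together with the polarization statement (Theorem \ref{polarize}), which guarantees that the transform carries the complexified ample class $\Omega_k$ on $X$ to a genuine complexified ample class $\Omega'_k$ on $Y$, I would solve the relation $\Phi_{\eE}^{\SX \to \SY} \cdot Z_{\Omega_k}^{\SX} = \zeta\, Z_{\Omega'_k}^{\SY}$ under the constraint $\Imm \zeta = 0$, thereby pinning down the correspondence $\Omega_k \leftrightarrow \Omega'_k$ and the associated shift $[k]$.

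With the numerical correspondence in hand, the core of the argument is to show that $\Phi_{\eE}^{\SX \to \SY}[k]$ carries the heart $\aA^{\SX}_{\Omega_k}$ onto $\aA^{\SY}_{\Omega'_k}$, and conversely. I would proceed in two stages, following the strategy of \cite{MP1, MP2} and, for the surface prototype, of \cite{Yos}. In the first stage I would establish the equivalence at the level of the first tilt (the tilt-stability heart $\bB$): this amounts to controlling the standard cohomology sheaves $\hH^i(\Phi_{\eE}(E))$ of the transform of a tilt-semistable object $E$ and showing that they are concentrated so that $\Phi_{\eE}[1]$ preserves tilt-stability up to shift. In the second stage I would bootstrap to the double tilt, proving that semistable objects of $\aA^{\SX}_{\Omega_k}$ transform to semistable objects of $\aA^{\SY}_{\Omega'_k}$; here the anti-diagonal representation is indispensable, since it expresses how the $\ch_{\le 3}$ data transform and hence how the two central charges match after rescaling by the real number $\zeta$, independently of the Picard rank or the choice of polarization.

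Finally, I would deduce the strong Bogomolov--Gieseker inequality of \cite{BMT} as a consequence of the established heart equivalence. The point is that the stability-function property of $Z_{\Omega'_k}^{\SY}$ on $\aA^{\SY}_{\Omega'_k}$, namely that it sends every nonzero object into the upper half plane, encodes exactly the desired cubic inequality on $\ch_3$ for tilt-stable objects; this property transports through the equivalence from the corresponding positivity on the $X$ side. Combining the classical surface-type Bogomolov--Gieseker inequality, which holds unconditionally for the first tilt, with the matching of central charges under $\Phi_{\eE}$ then yields the required inequality for tilt-stable objects on $Y$, and by symmetry on an arbitrary abelian $3$-fold.

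The hard part will be the cohomological control in the two-stage argument: proving that the Fourier--Mukai transform of a (double-)tilt-stable object again lies, after the shift $[k]$, in the target heart. This is genuinely delicate because the naive proof is circular, the good behavior of $\Phi_{\eE}$ on the hearts and the Bogomolov--Gieseker inequality being logically intertwined, so the estimates on $\hH^i(\Phi_{\eE}(E))$ must be arranged so as to break the circularity, precisely as in the principally polarized rank-one case. The anti-diagonal representation is what makes this feasible in full generality, as it supplies the exact numerical identities that the inductive estimates require.
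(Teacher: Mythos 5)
Your proposal follows essentially the same route as the paper: it uses the anti-diagonal representation (Theorem \ref{antidiagonalrep}) together with the central charge matching to pin down $\Omega_k \leftrightarrow \Omega'_k$, then runs the two-stage argument of \cite{MP1, MP2, Piy} --- first controlling the cohomology sheaves $H^i_{\Coh(Y)}(\Psi(E))$ to show the transform respects the first-tilt hearts $\bB$ up to a three-term filtration, then showing the torsion pairs $(\tT_2, \fF_2)$ are exchanged so that $\Psi[1](\aA^{\SX}) = \aA^{\SY}$ --- and finally deduces the strong Bogomolov--Gieseker inequality from the resulting heart equivalence and positivity of the central charge. This is exactly the paper's proof strategy, including its reliance on the anti-diagonal form to transplant the principally polarized Picard rank one arguments to the general case.
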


\section*{Notation}

\begin{itemize}[leftmargin=*]
\item Unless otherwise stated,  throughout this paper, all the varieties are smooth projective and defined over 
$\mathbb{C}$.  For a variety $X$, by $\Coh(X)$ we denote  the category of 
coherent sheaves on $X$, and by $D^b(X)$ we denote the bounded derived category of $\Coh(X)$.

\item For $0 \le i \le \dim X$, $\Coh_{\le i}(X) : = \{E \in \Coh(X): \dim \Supp(E)  \le i  \}$, $\Coh_{\ge i}(X) := \{E \in \Coh(X): \text{for } 0 \ne F \subset E,   \ \dim \Supp(F)  \ge i  \}$ and $\Coh_{i}(X) : = \Coh_{\le i}(X) \cap \Coh_{\ge i}(X)$. 

\item For $E \in D^b(X)$, $E^\vee = \dR\calHom(E, \oO_X)$. When $E$ is a torsion free sheaf we write its dual by $E^*$; so for a locally free sheaf $E$, $E^\vee  = E^*$.

\item The structure sheaf of a closed subscheme $Z \subset X$ as an object in $\Coh(X)$ is denoted by $\oO_Z$ and when $Z = \{x\}$ for some closed point $x\in X$ it is simply denoted by $\oO_x$.

\item When  $\aA$ is the heart of a bounded t-structure  on 
a triangulated category $\dD$,  by 
$H_{\aA}^i(-)$ we denote the corresponding  $i$-th cohomology functor. 
\item 
For a set of objects $\sS \subset \dD$ in a triangulated category $\dD$,  by 
$\langle \sS \rangle \subset \dD$ we denote
its extension closure, that is the smallest extension closed subcategory 
of $\dD$ which contains $\sS$.

\item We denote the upper half plane $\{z \in \mathbb{C} : \Imm z >0\}$ by $\mathbb{H}$.

\item We will denote an $m \times m$ anti-diagonal matrix with entries $a_k$, $k=1, \ldots,  m$ by
$$
\Adiag(a_1, \ldots, a_m)_{ij} : = \begin{cases}
a_k & \text{if } i=k, j=m+1-k \\
0 & \text{otherwise}.
\end{cases}
$$

\end{itemize}
\section{Preliminaries}

\subsection{Very weak stability conditions}\label{subsec:vw}
Let us recall the 
general arguments of very weak stability conditions, 
which are the variants of weak stability of~\cite{Tcurve1}
introduced in~\cite[Definition~B.1]{BMS}.
We closely follow the notions as in \cite{PT}.

Let $\dD$ be a triangulated category, and 
$K(\dD)$ its Grothendieck group. 
We fix a finitely generated free abelian 
group $\Gamma$ and a group homomorphism
\begin{align*}
\cl \colon K(\dD) \to \Gamma. 
\end{align*}

\begin{defi}
\rm
A \textit{very weak stability condition}
on $\dD$ is a pair $(Z, \aA)$, 
where $\aA$ is the heart of a bounded t-structure on $\dD$, 
and $Z \colon \Gamma \to \mathbb{C}$ is a group homomorphism 
satisfying the following conditions: 
\begin{enumerate}
\item For any $E \in \aA$, we have
$Z(E) \in \mathbb{H} \cup \mathbb{R}_{\le 0}$.
Here and elsewhere we write $Z(E) = Z(\cl(E))$, and $\mathbb{H}$ is the 
upper half plane. 

\item The associated slope function $\mu:  \aA \to \mathbb{R} \cup \{+\infty\}$ is defined by 
$$
\mu (E) =
\begin{cases}
+ \infty & \ \text{if } \Imm Z(E)=0 \\
 -\frac{\Ree Z(E)}{\Imm Z(E)} & \ \text{otherwise},
\end{cases}
$$
and it satisfies the Harder-Narasimhan  property.  
\end{enumerate}
\end{defi}

We say that 
$E \in \aA$ is $\mu$-(semi)stable 
if for any non-zero subobject $F \subset E$
in $\aA$, 
we have the inequality:
$\mu(F) <(\le) \, \mu(E/F)$.

The  Harder-Narasimhan filtration of an object $E \in \aA$ is 
a chain of subobjects 
$0=E_0 \subset E_1 \subset \cdots \subset E_n=E$
 in $\aA$ such that each $F_i=E_i/E_{i-1}$ is 
$\mu$-semistable with 
$\mu(F_i)>\mu(F_{i+1})$. 
If such  Harder-Narasimhan filtrations exists for all objects in $\aA$,
we say that $\mu$ satisfies the  Harder-Narasimhan property.

For a given a very weak stability condition $(Z, \aA)$, 
we define its slicing on $\dD$ (see \cite[Definition~3.3]{BriStab})
\begin{align*}
\{\pP(\phi)\}_{\phi \in \mathbb{R}}, \
\pP(\phi) \subset \dD
\end{align*}
as in the case of Bridgeland 
stability conditions (see \cite[Proposition~5.3]{BriStab}). 
Namely, for $0<\phi \le 1$, 
the category $\pP(\phi)$ is defined to 
be 
\begin{align*}
\pP(\phi) =\{ 
E \in \aA : 
E \mbox{ is } \mu \mbox{-semistable with }
\mu(E)=-1/\tan (\pi \phi)\} \cup \{0\}.
\end{align*} 
Here we set $-1/\tan \pi =\infty$. 
The other subcategories are defined by setting
\begin{align*}
\pP(\phi+1)=\pP(\phi)[1].
\end{align*}
For an interval $I \subset \mathbb{R}$, 
we define $\pP(I)$ to be the smallest extension 
closed subcategory of $\dD$ which contains 
$\pP(\phi)$ for each $\phi \in I$. 
For $0 \le s \le 1$, the pair $(\pP((s, 1]), \pP((0, s]) )$ of subcategories of $\aA = \pP((0,1])$ is a torsion pair, and the corresponding tilt is $\pP((s, s+1])$. 

Note that  
the category $\pP(1)$ contains the 
following category 
\begin{align*}
\cC \cneq \{E \in \aA : Z(E)=0\}. 
\end{align*}
It is easy to check that $\cC$ 
is closed under subobjects and quotients in $\aA$. 
In particular, $\cC$ is an 
abelian subcategory of 
$\aA$. 
Moreover, 
the pair $(Z, \aA)$ gives a \textit{Bridgeland stability 
condition} on $\dD$ if $\cC=\{0\}$. 
\subsection{Bridgeland stability on varieties}
Let $X$ be a  smooth projective variety and let $D^b(X)$  be
the bounded derived category of coherent sheaves on $X$.
A stability condition $\sigma$ on $D^b(X)$ is
\textit{numerical} if the central charge function $Z: K(X) \to \CC$ factors through the Chern character map
$\ch: K(X) \to H^{2*}_{\alg}(X,\QQ)$.

Usually, a stability condition $\sigma$ on $D^b(X)$ is called \textit{geometric} if
all the skyscraper sheaves $\oO_x$ of $x \in X$ are $\sigma$-stable of the same phase.
The following result gives some properties of  geometric stability
conditions on  varieties.

\begin{prop}
Let $X$ be a smooth projective variety of dimension $n$.
Let $\sigma= (Z, \aA)$ be a geometric stability condition on $D^b(X)$ with
all skyscraper sheaves $\oO_x$ of $x \in X$ are $\sigma$-stable with phase one.
If $E \in  \aA$ then $H^i_{\Coh(X)}(E) = 0$ for $i \notin \{-n+1, -n+2, \ldots ,0\}$.
\end{prop}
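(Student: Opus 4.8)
The plan is to work entirely with the slicing $\{\pP(\phi)\}_{\phi\in\RR}$ of $\sigma$ and the position of the skyscrapers in it. Since every $\oO_x$ is $\sigma$-stable of phase one we have $\oO_x\in\pP(1)$, whereas $E\in\aA=\pP((0,1])$ has all Harder--Narasimhan factors of phase in $(0,1]$; write $\phi^-(E)>0$ and $\phi^+(E)\le 1$ for the extreme slopes. Using $\Hom(\pP(\phi_1),\pP(\phi_2))=0$ for $\phi_1>\phi_2$, I would first record two vanishings valid for all $E\in\aA$ and all $x$:
\[
\text{(A)}\quad \Hom(E,\oO_x[j])=0 \ (j<0), \qquad \text{(B)}\quad \Hom(\oO_x,E[j])=0 \ (j<0).
\]
Indeed in (A) the target $\oO_x[j]$ has phase $1+j\le 0<\phi^-(E)$, and in (B) the source $\oO_x$ has phase $1>1+j\ge\phi^+(E[j])$; the strict inequality $\phi^-(E)>0$ built into $\aA=\pP((0,1])$ is what makes (A) work at the endpoint $j=-1$.

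For the upper bound I would feed (A) into the spectral sequence
\[
E_2^{p,q}=\Ext^p_X\!\big(H^{-q}_{\Coh(X)}(E),\oO_x\big)\ \Longrightarrow\ \Hom_{D^b(X)}\!\big(E,\oO_x[p+q]\big),\qquad 0\le p\le n.
\]
Let $q_{\max}$ be the largest degree with $H^{q_{\max}}_{\Coh(X)}(E)\ne 0$ and choose $x\in\Supp H^{q_{\max}}_{\Coh(X)}(E)$, so the edge term $E_2^{0,-q_{\max}}=\Hom(H^{q_{\max}}_{\Coh(X)}(E),\oO_x)$ is nonzero. Sitting on the edge $p=0$ it receives no differential, and its outgoing $d_2$ lands in $\Ext^2(H^{q_{\max}+1}_{\Coh(X)}(E),\oO_x)=0$ by maximality; hence it survives and $\Hom(E,\oO_x[-q_{\max}])\ne 0$. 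By (A) this forces $-q_{\max}\ge 0$, i.e. $H^i_{\Coh(X)}(E)=0$ for $i>0$.

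For the lower bound I would use the companion spectral sequence
\[
E_2^{p,q}=\Ext^p_X\!\big(\oO_x,H^{q}_{\Coh(X)}(E)\big)\ \Longrightarrow\ \Hom_{D^b(X)}\!\big(\oO_x,E[p+q]\big),\qquad 0\le p\le n,
\]
together with the Koszul computation $\Ext^n_X(\oO_x,F)\cong F\otimes k(x)$ (up to a line twist), which is nonzero exactly when $x\in\Supp F$. Writing $q_{\min}$ for the smallest nonzero degree and taking $x\in\Supp H^{q_{\min}}_{\Coh(X)}(E)$, the extremal term $E_2^{n,q_{\min}}$ is nonzero and lies at total degree $n+q_{\min}$; feeding this into (B) would give $n+q_{\min}\ge 0$.

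\emph{Main obstacle.} The two halves are genuinely asymmetric, and I expect the lower bound to be the crux. In the upper bound the surviving term sits on the edge $p=0$, so nothing can hit it; in the lower bound the extremal term sits at $p=n$ and can \emph{receive} differentials from $\Ext^{\le n-2}_X(\oO_x,H^{>q_{\min}}_{\Coh(X)}(E))$, so it need not survive, and even when it does the crude count only yields $q_{\min}\ge -n$ rather than the asserted $q_{\min}\ge -n+1$. (Serre duality $\Hom(\oO_x,E[j])\cong\Hom(E,\oO_x[n-j])^\vee$ does not help: it merely interchanges (A) and (B), hence the two bounds, rather than reducing one to the other.) I would attack this in two stages. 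First, choose $x$ to be a general closed point of a top-dimensional component of $\Supp H^{q_{\min}}_{\Coh(X)}(E)$ and control the incoming differentials by a support/depth estimate on the groups $\Ext^{\le n-2}_X(\oO_x,H^{>q_{\min}}_{\Coh(X)}(E))$, so that the corner survives and $q_{\min}\ge -n$. Second, to rule out the boundary case $q_{\min}=-n$, I would use that $\oO_x$ is not merely an object of $\aA$ but is $\sigma$-stable of \emph{phase exactly one}: the case $q_{\min}=-n$ produces, for all such $x$ at once, nonzero maps $\oO_x\to E$ in $\aA$ which, by stability of $\oO_x$ (maximal phase, hence monomorphisms onto copies of $\oO_x$), realize every such $\oO_x$ as a subobject of the single object $E$; I would then derive a contradiction by a global family argument, spreading the points over $X$ through the diagonal $\oO_\Delta\subset X\times X$ and applying cohomology-and-base-change. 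Combining the two bounds gives $H^i_{\Coh(X)}(E)=0$ for $i\notin\{-n+1,\ldots,0\}$.
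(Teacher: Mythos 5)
Your upper bound half is correct, and it is essentially a spectral-sequence rendering of one half of what the paper extracts from its key vanishings. The genuine gap is the lower bound, exactly where you place it, but your diagnosis of the tools is backwards: the claim that Serre duality ``does not help'' is precisely wrong, and this misjudgment is why your repair plan cannot close the gap. The paper's proof (adapted from \cite[Lemma 10.1]{BriK3}) pivots on Serre duality: $\Hom(\oO_x, E[j]) \cong \Hom(E,\oO_x[n-j])^*$ (using $\oO_x \otimes \omega_X \cong \oO_x$) converts your vanishing (B) into $\Hom(E,\oO_x[k])=0$ for $k>n$, so that all the information becomes a \emph{two-sided} concentration statement about the single complex $\dR\calHom(E,\oO_x) \cong E^\vee \dotimes \oO_x$, rather than two unrelated bounds. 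Two ingredients absent from your proposal then finish the argument. First, the endpoint $k=n$, i.e. $\Hom(\oO_x,E)=0$: this is false for general $E\in\aA$ (take $E=\oO_x$ itself), and the paper obtains it by a reduction step: since $\Coh_0(X)\subset\pP(1)$, the Harder--Narasimhan property lets one strip off the maximal subobject $T\subset E$ lying in $\Coh_0(X)$; as $T$ is a sheaf placed in degree $0$, the long exact sequence of cohomology sheaves reduces the proposition to $E/T$, which does satisfy $\Hom(\Coh_0(X),E/T)=0$. This step is what gains the ``$+1$'' you could not reach: the object $\oO_X[n]$ satisfies (A), (B) and even the dual vanishing for $k>n$, yet has cohomology in degree $-n$, so (A) and (B) alone can never distinguish $-n$ from $-n+1$. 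Second, with $\Hom(E,\oO_x[k])=0$ for all $x$ and all $k\notin\{0,\ldots,n-1\}$ in hand, the paper does not run your second spectral sequence at all; it invokes \cite[Proposition 5.4]{BM}: an object whose derived Homs into every skyscraper are concentrated in $n$ consecutive degrees is quasi-isomorphic to a length-$n$ complex of locally free sheaves, here in degrees $-n+1,\ldots,0$, which bounds the cohomology sheaves on both sides at once. The mechanism there is a locally-free-resolution/Nakayama argument on the perfect complex $E^\vee$, and that is exactly what evades the incoming-differential problem you correctly identified.

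Your own two-stage repair is unlikely to be salvageable as written. Stage one (survival of the corner $E_2^{n,q_{\min}}$) can fail outright: if $\Supp H^{q_{\min}}_{\Coh(X)}(E)$ is, say, a single point lying in the non-locally-free locus of the higher cohomology sheaves, no choice of $x$ makes the incoming terms $\Ext^{\le n-2}(\oO_x, H^{>q_{\min}}_{\Coh(X)}(E))$ vanish. Stage two is worse: having nonzero (hence, by stability of $\oO_x$, injective) maps $\oO_x\to E$ for every $x$ in a positive-dimensional family is not by itself contradictory --- adjoining finitely many such skyscraper subobjects merely increases $\Ree Z$ of the quotient, pushing its phase toward $0$, which $\aA=\pP((0,1])$ permits whenever $\Imm Z(E)>0$ --- so the hoped-for contradiction from a diagonal/base-change argument has no numerical teeth and would end up importing the duality argument anyway. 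The correct route is the paper's: reduction via $\Coh_0(X)\subset\pP(1)$, Serre duality to get the two-sided vanishing, then \cite[Proposition 5.4]{BM}.
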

\begin{proof}
The following proof is adapted from \cite[Lemma 10.1]{BriK3}.
Let $\pP$ be the corresponding slicing of $\sigma$.
Since $\aA = \pP((0,1])$ and $\Coh_0(X) \subset \pP(1)$, from the Harder-Narasimhan property, we only need to consider $E \in \aA $ such that $\Hom_{\SX} (\Coh_0(X), E) =0$.
For any skyscraper sheaf $\oO_x$ of $x \in X$ we have $\oO_x[i] \in \pP(1+i)$ and  $E[i] \in \pP((i,1+i])$.
Therefore, for all $i<0$,
$\Hom_{\SX}(E, \oO_x[i]) = 0$, and $\Hom_{\SX}(\oO_x, E[1+i])$ $\cong$ $\Hom_{\SX}(E,\oO_x [n-1-i])^* =0$.
So by \cite[Proposition 5.4]{BM}, $E$ is quasi-isomorphic to a complex of locally free sheaves of length $n$.
This completes the proof as required. 
\end{proof}

When $X$ is a smooth projective curve, the central charge  function 
$Z(E) = -\deg(E) + i \rk (E)$ together with the  heart 
$\Coh(X)$ of the standard t-structure defines a geometric stability condition on $D^b(X)$.
However, for a smooth projective variety $X$ with $\dim  X  \ge 2$, there is no numerical
stability condition on $D^b(X)$ with $\Coh(X)$ as the heart of a stability condition (see \cite[Lemma 2.7]{TodLimit} for a proof). 
Motivated by the constructions for smooth projective surfaces  (see \cite{BriK3, AB}) together with some observations in  Mathematical Physics, for any dimensional smooth projective variety $X$, it is expected that 
the function defined by
\begin{equation*}
Z_{B + i \omega}(-) = - \int_X e^{-B- i \omega} \ch(-)
\end{equation*}
is a central charge function of some geometric stability condition on $D^b(X)$ (see \cite[Conjecture 2.1.2]{BMT}). 
Here $B + i \omega \in \NS_{\CC}(X)$ is a complexified ample class on $X$, that is by definition $B, \omega \in \NS_{\RR}(X)$ with $\omega$ an ample class. 
By using the notion of very weak stability, let us conjecturally construct a heart for this central charge function.  

Let $n=\dim X $. For $0\le k\le n$, we define the $k$-truncated Chern character by
$$
\ch_{\le k}(E) = (\ch_0(E), \ch_1(E), \ldots, \ch_k(E), 0, \ldots, 0),
$$
and the function $Z^{(k)}_{B + i \omega} : K(X) \to \CC$ by
\begin{equation*}
Z^{(k)}_{B + i \omega}(E) = - i^{n-k}\int_X e^{-B- i \omega} \ch_{\le k}(E).
\end{equation*}

The usual slope stability on sheaves gives the very weak stability condition $(Z^{(1)}_{B + i \omega}, \Coh(X))$. 
Moreover, we formulate the following:
\begin{conj}[\cite{Piy2}]
\label{conjstab}
For each $1 \le k < n$, the pair $\sigma_k = (Z^{(k)}_{B + i \omega}, \aA^{(k)}_{B + i \omega})$ gives a very weak stability condition on $D^b(X)$, where the hearts $\aA^{(k)}_{B + i \omega}$, $1\le k \le n$  are defined by 
\begin{equation*}
 \left.\begin{aligned}
         & \aA^{(1)}_{B + i \omega} = \Coh(X) \\
         &\aA^{(k+1)}_{B + i \omega} = \pP_{\sigma_k}((1/2,\,3/2])
       \end{aligned}
  \ \right\}.
\end{equation*}
Moreover, the pair $\sigma_n = (Z_{B + i \omega}, \aA^{(n)}_{B + i \omega})$ is a Bridgeland stability condition on $D^b(X)$. 
\end{conj}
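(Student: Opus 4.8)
The natural approach is to argue by induction on $k$. The base case $k=1$ is the classical fact, recalled above, that ordinary slope stability $(Z^{(1)}_{B+i\omega},\Coh(X))$ is a very weak stability condition: here $\Imm Z^{(1)}_{B+i\omega}$ is a positive multiple of the rank and the induced slope is the ($B$-twisted) $\omega$-slope, so the Harder-Narasimhan property is the classical one for semistable sheaves and the positivity $Z^{(1)}_{B+i\omega}(E)\in\mathbb{H}\cup\mathbb{R}_{\le 0}$ is immediate (torsion sheaves land on $\mathbb{R}_{\le 0}$). For the inductive step I assume $\sigma_k=(Z^{(k)}_{B+i\omega},\aA^{(k)}_{B+i\omega})$ is a very weak stability condition with slicing $\pP_{\sigma_k}$, and set $\aA^{(k+1)}_{B+i\omega}=\pP_{\sigma_k}((1/2,3/2])$. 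By the torsion-pair remark preceding the conjecture, applied with $s=1/2$, this is the heart of a bounded t-structure, namely the tilt of $\aA^{(k)}_{B+i\omega}$ at the torsion pair $(\pP_{\sigma_k}((1/2,1]),\pP_{\sigma_k}((0,1/2]))$. It then remains to verify the two axioms for $(Z^{(k+1)}_{B+i\omega},\aA^{(k+1)}_{B+i\omega})$: the positivity $Z^{(k+1)}_{B+i\omega}(E)\in\mathbb{H}\cup\mathbb{R}_{\le 0}$ for every $E\in\aA^{(k+1)}_{B+i\omega}$, and the Harder-Narasimhan property for the resulting slope $\mu_{\sigma_{k+1}}$.

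The engine of the positivity is the relation between consecutive central charges. The prefactor $i^{n-k}$ is arranged precisely so that multiplication by $i$ carries $Z^{(k)}_{B+i\omega}$ into $Z^{(k+1)}_{B+i\omega}$ up to the contribution of the newly included $\ch_{k+1}$-term; consequently, on $\sigma_k$-semistable objects the functional $\Ree Z^{(k)}_{B+i\omega}$ becomes the leading part of $\Imm Z^{(k+1)}_{B+i\omega}$, so that the sign of $\Imm Z^{(k+1)}_{B+i\omega}$ is governed by the sign of $\mu_{\sigma_k}$. Since $\aA^{(k+1)}_{B+i\omega}$ is generated under extensions by the $\sigma_k$-semistable objects of phase in $(1/2,1]$, where $\mu_{\sigma_k}>0$, and by the shifts of those of phase in $(0,1/2]$, where $\mu_{\sigma_k}\le 0$, one checks phase by phase that $\Imm Z^{(k+1)}_{B+i\omega}\ge 0$ on all of $\aA^{(k+1)}_{B+i\omega}$. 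The real content is then concentrated on the boundary.

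The \emph{main obstacle} is this boundary case. For $E\in\aA^{(k+1)}_{B+i\omega}$ with $\Imm Z^{(k+1)}_{B+i\omega}(E)=0$---which forces $E$ to be an extension of $\sigma_k$-semistable objects of slope $\mu_{\sigma_k}=0$ and of objects of $\cC_k=\{F\in\aA^{(k)}_{B+i\omega}:Z^{(k)}_{B+i\omega}(F)=0\}$---positivity demands $\Ree Z^{(k+1)}_{B+i\omega}(E)\le 0$. Unwinding this turns it into a quadratic Bogomolov-Gieseker type inequality, bounding $\ch_{k+1}(E)$ paired with suitable powers of $\omega$ and $B$ in terms of the lower truncated characters $\ch_{\le k}(E)$, for $\sigma_k$-semistable $E$. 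For the single tilt of a surface this is the classical Bogomolov-Gieseker inequality for slope-semistable sheaves; for the tilt producing $\sigma_3$ on a threefold it is precisely the strong Bogomolov-Gieseker conjecture of \cite{BMT}, which is open for general threefolds but which the final section establishes for abelian $3$-folds using the anti-diagonal form of the cohomological Fourier-Mukai transform. In dimension at least $4$ the corresponding inequality for tilt-semistable objects is unknown, and this is exactly why the statement must be posed as a conjecture.

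Granting the positivity, the Harder-Narasimhan property for $\mu_{\sigma_{k+1}}$ follows from the general formalism for very weak stability conditions in \cite{BMS, PT}: one checks that $\cl$ has finitely generated image and that the imaginary part (and the real part on its kernel) takes values in a discrete subset on the relevant subquotients, which excludes infinite descending chains of destabilising subobjects, exactly as in the surface and threefold models of Bridgeland, Arcara-Bertram and Bayer-Macr\`i-Toda. Finally, for $k=n$ the prefactor is $i^{0}=1$, so $Z^{(n)}_{B+i\omega}=Z_{B+i\omega}$ is the full central charge; by the criterion recalled in Section \ref{subsec:vw}, to upgrade $\sigma_n$ from a very weak to a genuine Bridgeland stability condition it suffices to show that $\cC=\{E\in\aA^{(n)}_{B+i\omega}:Z_{B+i\omega}(E)=0\}$ vanishes, which is once more a consequence of the sharpest Bogomolov-Gieseker inequality at the top stage. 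The entire scheme is therefore conditional on these inequalities, and the substantive work---carried out in the final section for $\dim X\le 3$---is to prove them; their unavailability in higher dimension is the essential open point.
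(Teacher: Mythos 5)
This statement is a conjecture which the paper itself does not prove: it is quoted from the work in preparation \cite{Piy2}, and the paper's Remark \ref{remarkconjstab} only records the known cases and the reduction of the remaining ones to Bogomolov-Gieseker type inequalities. Your proposal reproduces exactly that picture --- induction by tilting at the torsion pair $(\pP_{\sigma_k}((1/2,1]),\pP_{\sigma_k}((0,1/2]))$, positivity of the new central charge reduced to BG-type inequalities (the classical one for surfaces, inequality \eqref{BGineq} of \cite{BMT} for the 3-fold case, open in dimension $\ge 4$), and the vanishing of $\cC$ needed to upgrade $\sigma_n$ from very weak to Bridgeland --- so it is correct as a conditional scheme and coincides with the paper's own (conjectural) approach.
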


\begin{rmk}
\label{remarkconjstab}
\rm
As  mentioned before, $(Z^{(1)}_{\omega,B}, \aA^{(1)}_{B + i \omega}  = \Coh(X))$ is a very weak stability condition for any variety and a Bridgeland stability condition for curves. By \cite{BriK3, AB}, $ (Z^{(2)}_{\omega,B}, \aA^{(2)}_{B + i \omega} )$ is a Bridgeland stability condition for surfaces. 
In \cite{BMT}, the authors proved that the pair $(Z^{(2)}_{B + i \omega}, \aA^{(2)}_{B + i \omega} )$  is again a 
very weak stability condition for 3-folds. Here the stability was called tilt slope stability.
The usual Bogomolov-Gieseker inequality for $Z^{(1)}_{B + i \omega}$ stable sheaves plays a crucial role in these proofs. Clearly the same arguments work for any higher dimensional varieties. Therefore, we  can always construct the category  $\aA^{(3)}_{B + i \omega}$ when $\dim X \ge 2$. In \cite{BMT}, the authors conjectured that this category is a heart of a Bridgeland stability condition with the central charge $Z_{B+i\omega}$. Moreover, they reduced it to prove Bogomolov-Gieseker type inequalities for $Z^{(2)}_{B + i \omega}$ stable objects $E \in \aA^{(2)}_{B + i \omega}$ with $\Ree Z^{(2)}_{B + i \omega}(E) =0$: 
\begin{equation}
\label{BGineq}
\ch^B_3 (E) \le \frac{\omega^2}{18} \ch_1^B(E).
\end{equation}
So far this conjectural inequality
is known to hold for some Fano 3-folds \cite{BMT, Mac, Sch, Li}, abelian 3-folds \cite{MP1, MP2, Piy, BMS}  and the \'etale quotients of  abelian 3-folds \cite{BMS}. 
\end{rmk}

\subsection{Fourier-Mukai theory }
Let us quickly recall some of the important notions in Fourier-Mukai theory.
Further details can be found in \cite{Huy2}.

Let $X,Y$ be smooth projective varieties and let $p_i$, $i=1,2$ be the  projection
maps from $X \times Y$ to $X$ and $Y$, respectively.
The {\it Fourier-Mukai functor} $\Phi_{\eE}^{\SX \to \SY}:
D^b(X) \to D^b(Y)$ with kernel $\eE \in D^b(X \times Y)$
is defined by
$$
\Phi_{\eE}^{\SX \to \SY}(-) = \textbf{R}p_{2*} (\eE \stackrel{\textbf{L}}{\otimes} p_1^*(-)).
$$

The map $\Sigma: Y \times X \to X \times Y$ is defined by $\Sigma(y,x) = (x,y)$ for any $x \in X$ and $y \in Y$.
Let
$ \eE_L = \Sigma^* (\eE^\vee \stackrel{\BL}{\otimes} p_2^*\omega_Y )\, [\dim Y] $, and 
$\eE_R = \Sigma^*(\eE^\vee \stackrel{\BL}{\otimes} p_1^* \omega_X )\, [\dim X]$.
Then we have the adjunctions (see \cite[Proposition 5.9]{Huy2}):
$\Phi_{\eE_L}^{\SY \to \SX} \dashv \Phi_{\eE}^{\SX \to \SY}  \dashv \Phi_{\eE_R}^{\SY \to \SX}$.

When $\Phi_{\eE}^{\SX \to \SY}$ is an equivalence of the derived categories,
usually it is called a {\it Fourier-Mukai transform}.
On the other hand by Orlov's Representability Theorem (see \cite[Theorem 5.14]{Huy2}),
any equivalence between $D^b(X)$ and $D^b(Y)$ is isomorphic
to a Fourier-Mukai  transform $\Phi_{\eE}^{\SX \to \SY}$ for some $\eE \in D^b(X \times Y)$.

Any Fourier-Mukai  functor $\Phi_{\eE}^{\SX \to \SY}: D^b(X) \to D^b(Y)$ induces a linear map
$\Phi^{\SH}_{\eE} : H^{2*}_{\alg}(X, \QQ)  \to   H^{2*}_{\alg}(Y, \QQ)$, usually 
called the cohomological Fourier-Mukai  functor,
 and it is an isomorphism when $\Phi_{\eE}^{\SX \to \SY}$ is a  Fourier-Mukai  transform.
The induced transform fits into the following commutative diagram, due to the Grothendieck-Riemann-Roch theorem.
$$
\xymatrixcolsep{4.5pc}
\xymatrixrowsep{2.25pc}
\xymatrix{
 D^b(X)  \ar[d]_{[-]} \ar[r]^{\Phi_{\eE}^{\SX \to \SY}}  &   D^b(Y) \ar[d]^{[-]} \\
 K(X)  \ar[d]_{v_X(-)}  \ar[r]^{\Phi^K_{\eE}}  &   K(Y) \ar[d]^{v_Y(-)} \\
 H^{2*}_{\alg}(X, \QQ) \ar[r]^{\Phi^{\SH}_{\eE}}  &   H^{2*}_{\alg}(Y, \QQ)
}
$$
Here $v_Z(-) = \ch(-) \sqrt{\text{td}_Z}$ is the Mukai vector map, where
$\ch: K(Z) \to H^{2*}_{\alg}(Z, \QQ)$ is the  Chern character map and $\text{td}_Z$ is the Todd class of $Z$.

Let  $v \in H^{2*}_{\alg}(X, \QQ)$  be a Mukai vector. Then $v= \sum_{i=0}^{\dim X} v_i$ for $v_i \in H^{2i}_{\alg}(X, \QQ)$ and the Mukai dual of
$v$ is  defined by $v^* = \sum_{i=0}^{\dim X} (-1)^i v_i$.
A symmetric bilinear form $\langle - , - \rangle_{\SX}$ called \textit{Mukai pairing} is defined by the formula
\begin{equation*}
\langle v, w \rangle_{\SX} = - \int_X v^* \cdot w \cdot e^{{c_1(X)}/{2} }.
\end{equation*}

Note that for an abelian variety $X$, $\text{td}_X =1$ and $c_1(X) =0$. Hence the Mukai vector $v(E)$ of $E \in D^b(X)$ is the same as its Chern character $\ch(E)$.

Due to Mukai and C\u{a}ld\u{a}raru-Willerton, 
for any $u \in H^{2*}_{\alg}(Y, \QQ)$ and $v \in H^{2*}_{\alg}(X, \QQ)$ we have  
\begin{equation}
\label{isometry}
\left\langle \Phi^{\SH}_{\eE_L}(u) \, , \, v \right\rangle_{\scriptscriptstyle X}
= \left\langle  u \, , \, \Phi^{\SH}_{\eE }(v) \right\rangle_{\scriptscriptstyle Y}
\end{equation}
(see \cite[Proposition 5.44]{Huy2}, \cite{CW}).
\subsection{Abelian varieties}
Over any field, an {\it abelian variety} $X$  is a complete group variety, that is
 $X$ is an algebraic variety equipped with the maps
 $m : X \times X \to X, \ (x,y) \mapsto x+y$  (the group law), and
$(-1) : X \to X, \ x \mapsto -x$ (the inverse map),
together with the identity element $e \in X$.
For $a \in X$, the morphism $t_a : X \to X$ is defined by
$t_a = m(-, a) : x \mapsto x + a$.
Over the field of complex numbers, an abelian variety is a complex torus with the structure of a projective algebraic variety.

Let $\Pic^0(X)$ be  the subgroup of the abelian group $\Pic(X)$ consisting of elements represented by line bundles which are algebraically equivalent to zero, and the corresponding quotient $\Pic(X)/ \Pic^0(X)$ is the N\'eron-Severi group 
$\NS(X) $. 
The group $\Pic^0(X)$ is naturally isomorphic to an abelian variety called the \textit{dual abelian variety} of $X$,  denoted by $\HX$.

The \textit{Poincar\'e line bundle} $\pP$ on the product $X \times \HX$ is the
uniquely determined line bundle satisfying
(i) $\pP_{X \times \{\widehat{x}\}} \in \Pic(X)$ is represented by $\widehat{x} \in \HX$, and
(ii) $\pP_{\{e\} \times \HX } \cong \oO_{\HX}$.
In \cite{Muk2}, Mukai proved that the Fourier-Mukai functor $\Phi^{\SX \to \SHX}_{\pP}: D^b(X) \to D^b(\HX)$ is an equivalence of the derived categories, that is a Fourier-Mukai transform.

A vector bundle $E$ on an abelian variety $X$ is called \textit{homogeneous} if we have
$t_x^*E \cong E$ for all $x \in X$. A vector bundle $E$ on $X$ is homogeneous if and only if $E$
can be filtered by line bundles from $\Pic^0(X)$ (see \cite{Muk1}).
We call a vector bundle $E$ is \textit{semihomogeneous} if for every $x \in X$ there exists a flat line bundle $\pP_{X \times \{\widehat{x}\}}$ on $X$
such that $t_x^*E \cong E \otimes  \pP_{X \times \{\widehat{x}\}}$.
A vector bundle $E$ is called \textit{simple} if we have $\End_{\SX}(E) \cong \CC$.
A rank $r$ simple semihomogeneous bundle $E$ has the Chern character 
\begin{equation}
\label{semihomochern}
\ch(E) = r \ e^{c_1(E)/r}.
\end{equation}
Furthermore, due to Mukai, for any $D_{\SX} \in \NS_{\QQ}(X)$, there  exists simple semihomogeneous bundles $E$ on $X$ with 
$\ch(E) = r \, e^{D_{\SX}}$ for some $r \in \ZZ_{>0}$. See \cite{Orl} for further details.

The image of an ample line bundle  $L$ on $X$ under the Fourier-Mukai transform $\Phi_{\pP}^{\SX \to \SHX}$ is
$$
\Phi_{\pP}^{\SX \to \SHX} (L) \cong \HL
$$
for some rank $\chi(L) = c_1(L)^g/g!$ semihomogeneous bundle $\HL$. Moreover, $-c_1(\HL)$ is an ample divisor class on $\HX$ (see \cite{BL1}). Therefore, we have the following:

\begin{lem}
\label{classicalcohomoFMT}
Let $\lx \in \NS_{\QQ}(X)$ be an ample class on $X$. Under the induced cohomological transform $\Phi_{\pP}^{\SH}$ of  $\Phi_{\pP}^{\SX \to \SHX}$ we have
\begin{equation*}
\Phi_{\pP}^{\SH}(e^{\lx}) = ({\lx^g}/{g!}) \, e^{-\lhx}
\end{equation*}
 for some ample class $\lhx \in \NS_{\QQ}(\HX)$, satisfying 
\begin{equation*}
({\lx^g}/{g!}) ({\lhx^g}/{g!}) =1.
\end{equation*}
Moreover, for each $0 \le i \le g$,
\begin{equation*}
\Phi_\pP^{\SH}\left( \frac{ \ell_{\SX}^i}{i!} \right) = \frac{(-1)^{g-i}  \ell_{\SX}^g}{g! (g-i)!} \, \ell_{\SHX}^{g-i}. 
\end{equation*}
\end{lem}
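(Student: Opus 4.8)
The plan is to derive both displayed identities from the classical facts already quoted just before the statement — namely that $\Phi_\pP^{\SX\to\SHX}(L)\cong\HL$ with $\HL$ semihomogeneous of rank $\chi(L)=\lx^g/g!$ and $-c_1(\HL)$ ample (\cite{BL1}), together with the Chern-character formula \eqref{semihomochern} and one structural feature of the Poincar\'e bundle: its degree-reversing action on cohomology.

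First I would prove the exponential formula. Fix an ample $L$ with $c_1(L)=\lx$; since $\Phi_\pP^{\SX\to\SHX}$ is an equivalence and $L$ is simple, $\HL$ is simple semihomogeneous, so \eqref{semihomochern} gives $\ch(\HL)=r\,e^{c_1(\HL)/r}$ with $r=\rk\HL=\chi(L)$. The Grothendieck--Riemann--Roch compatibility together with $\td_X=1$ yields $\Phi_\pP^{\SH}(e^{\lx})=\Phi_\pP^{\SH}(\ch(L))=\ch(\HL)$. Putting $\lhx:=-c_1(\HL)/r$, which is ample because $-c_1(\HL)$ is ample and $r>0$, and using $r=\chi(L)=\int_X e^{\lx}=\lx^g/g!$, I obtain $\Phi_\pP^{\SH}(e^{\lx})=(\lx^g/g!)\,e^{-\lhx}$. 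For a merely rational ample class one passes to an integral multiple and rescales, or invokes Mukai's existence of simple semihomogeneous bundles with $\ch=r\,e^{D}$ directly.

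Next I would record the degree-reversal. On an abelian variety $c_1(\pP)$ lies in the K\"unneth summand $H^1(X)\otimes H^1(\HX)$, since $\pP$ is trivial along $\{e\}\times\HX$ and algebraically trivial along each $X\times\{\hat x\}$. Writing $\Phi_\pP^{\SH}(\alpha)=p_{2*}\!\left(p_1^*\alpha\cdot e^{c_1(\pP)}\right)$ and counting K\"unneth degrees, for $\alpha\in H^{2i}_{\alg}(X)$ the pushforward $p_{2*}$ (integration over the $g$-dimensional fibre $X$) annihilates every term except the one coming from $c_1(\pP)^{2(g-i)}/(2(g-i))!$, and that term sits in $H^{2(g-i)}_{\alg}(\HX)$. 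Hence $\Phi_\pP^{\SH}$ carries $H^{2i}_{\alg}(X)$ into $H^{2(g-i)}_{\alg}(\HX)$, which is exactly what lets me split the exponential identity into its homogeneous pieces. Comparing the $H^{2(g-i)}(\HX)$-components of $\Phi_\pP^{\SH}(e^{\lx})=\sum_i\Phi_\pP^{\SH}(\lx^i/i!)$ and of $(\lx^g/g!)e^{-\lhx}$ gives $\Phi_\pP^{\SH}(\lx^i/i!)=\frac{(-1)^{g-i}\lx^g}{g!\,(g-i)!}\,\lhx^{g-i}$ for each $0\le i\le g$, the second asserted formula. Finally, the normalization follows from the $i=0$ case: the formula just obtained reads $\Phi_\pP^{\SH}(1)=(-1)^g(\lx^g/g!)(\lhx^g/g!)\,[\mathrm{pt}]_{\HX}$, while Mukai's computation $\Phi_\pP^{\SX\to\SHX}(\oO_X)\cong\oO_0[-g]$ (with $\oO_0$ the skyscraper at the origin $0\in\HX$) gives $\Phi_\pP^{\SH}(1)=(-1)^g[\mathrm{pt}]_{\HX}$ directly; equating coefficients yields $(\lx^g/g!)(\lhx^g/g!)=1$.

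I expect the genuine content, and the main obstacle, to be the degree-reversal step: pinning down that $c_1(\pP)\in H^1(X)\otimes H^1(\HX)$ and that $p_{2*}$ selects precisely the single power $c_1(\pP)^{2(g-i)}$ is what upgrades the one exponential identity into the full anti-diagonal list of graded identities, and it is special to abelian varieties. Everything else is formal once \eqref{semihomochern} and Mukai's inversion computation are granted.
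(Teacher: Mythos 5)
Your proof is correct and takes essentially the same route as the paper, which presents this lemma as a direct consequence of the facts quoted immediately before it: $\Phi_{\pP}^{\SX \to \SHX}(L) \cong \HL$ with $\HL$ semihomogeneous of rank $\chi(L) = \lx^g/g!$, the ampleness of $-c_1(\HL)$ from [BL1], and the Chern character formula \eqref{semihomochern} for simple semihomogeneous bundles. The only additions are that you make explicit the K\"unneth degree-reversal argument and the normalization via $\Phi_{\pP}^{\SX \to \SHX}(\oO_X) \cong \oO_{\hat{0}}[-g]$, details which the paper leaves implicit.
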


\section{Cohomological Fourier-Mukai Transforms}
\label{sec:FMTsetup}
Let $Y$ be a $g$-dimensional abelian variety and let $D_{\SY} \in \NS_{\QQ}(Y)$. Let $X$ be the fine moduli space of rank $r$ simple semihomogeneous bundles $E$ on $Y$ with $c_1(E)/r =D_{\SY}$. Due to Mukai $X$ is a $g$-dimensional abelian variety. Let $\eE$ be the associated universal bundle on $X \times Y$; so by \eqref{semihomochern} we have 
$$
\ch(\eE_{\{x\} \times Y}) = r \, e^{D_{\SY}}.
$$
 Let $\Phi_\eE^{\SX  \to \SY} : D^b(X) \to D^b(Y)$ be the corresponding Fourier-Mukai transform from $D^b(X)$ to $D^b(Y)$ with kernel $\eE$. See \cite{Orl} for further details.
 Then its quasi inverse is given by 
$\Phi_{\Sigma^*\eE^\vee}^{\SY \to \SX}[g]$. Again, by \eqref{semihomochern} we have
$$
\ch(\eE_{X \times \{y\}} ) = r \, e^{D_{\SX}}
$$
for some $D_{\SX} \in \NS_{\QQ}(X)$.

A \textit{polarization} on $X$ is by definition the first Chern class $c_1(L)$ of an ample line bundle $L$ on $X$. 
However, it is usual to say the line bundle $L$ itself a polarization.

Let $a \in X$ and $b \in Y$.
Consider the Fourier-Mukai functor $\Gamma$ from $D^b(X)$ to $D^b(\HY)$ defined by 
$$
\Gamma = \Phi_\pP^{\SY  \to \SHY} \circ \eE_{\{a\}\times Y}^* \circ \Phi_\eE^{\SX  \to \SY} \circ \eE_{X \times \{b\}}^*\, [g],
$$
where $\eE_{\{a\}\times Y}^*$ denotes the functor $\eE_{\{a\}\times Y} \otimes^*(-)$ and similar for $ \eE_{X \times \{b\}}^*$. 
Let $\widehat \Gamma: D^b(\HY) \to D^b(X) $ be the  Fourier-Mukai functor defined by 
$$
\widehat \Gamma =    \eE_{X \times \{b\}} \circ \Phi_{\Sigma^*\eE^\vee}^{\SY\to \SX  }  \circ \eE_{\{a\}\times Y} \circ\Phi_{\Sigma^*\pP^\vee}^{ \SHY \to \SY }\, [g].
$$
Then $\widehat \Gamma $ and $\Gamma$ are adjoint functors to each other. By direct computation, 
$\Gamma (\oO_{x}) = \oO_{Z_x}$ for some $0$-subscheme $Z_x \subset \HY$, and 
$\Gamma (\oO_{\widehat y}) = \oO_{Z_{\widehat y}}$ for some $0$-subscheme $Z_{\widehat y} \subset  X$;
where the lengths of $Z_x$ and $Z_{\widehat y}$ are $r^3$ and $r$ respectively. 
Therefore, the Fourier-Mukai kernel $\fF$ of $\Gamma$ is $\fF \in  \Coh_g(X \times \HY)$, with 
$\fF^{\vee} \cong \calExt^g(\fF, O_{X \times \HY})[-g]$.
So $\Gamma (\Coh_{i}(X)) \subset \Coh_i(\HY)$ and $\widehat \Gamma (\Coh_i (\HY)) \subset \Coh_i(X)$ for all $i$. 
Also by direct computation,  $\Gamma(\oO_X)$ and $\widehat \Gamma (\oO_{\HY}) $ are homogeneous bundles of rank $r$ and $r^3$ respectively. 
Let $\lx \in \NS_{\QQ}(X)$ be an ample class.
Under the induced cohomological map $ \Gamma^{\SH}$ we have 
$$
\Gamma^{\SH}(e^{\ell_{\SX}}) = r\  e^{\ell_{\SHY}}, 
$$
for some class $\ell_{\SHY} \in \NS_{\QQ}(\HY)$ satisfying $r^2 \, {\lx^g} =  {\lhy^g}$. 
So we have 
$$
\widehat \Gamma^{\SH}(e^{\ell_{\SHY}}) = r^3 \  e^{\ell_{\SX}}.
$$
Moreover, for each $0 \le i \le g$,
$$
\Gamma^{\SH}( \ell_{\SX}^i) = r  \, \ell_{\SHY}^i, \  \ \
\widehat \Gamma^{\SH}(\lhy^i) = r^3 \,  \lx^i.
$$

 
By the Nakai--Moishezon criterion, one can show $\ell_{\SHY}$ is an ample class on $\HY$.
 By Theorem \ref{classicalcohomoFMT}, under the induced cohomological map of $\Phi_{\Sigma^*\pP^\vee}^{ \SHY \to \SY }$ we have
$$
\Phi_{\Sigma^*\pP^\vee}^{\SH}(e^{\ell_{\SHY}}) =({\ell_{\SHY}^g}/{g!})  \, e^{- \ell_{\SY}},
$$
for some ample class $\ell_{\SY} \in \NS_{\QQ}(Y)$.

Let  $\Xi : D^b(X) \to D^b(Y)$ be the Fourier-Mukai functor defined by 
$$
\Xi = \eE_{\{a\}\times Y}^* \circ \Phi_\eE^{\SX  \to \SY} \circ \eE_{X \times \{b\}}^* =\Phi_{\Sigma^*\pP^\vee}^{ \SHY \to \SY } \circ \Gamma.
$$
The image of $e^{\ell_{\SX}}$ under its induced cohomological transform $\Xi^{\SH}$  is 
$ ({r^3 \lx^g}/{g!}) \, e^{- \ell_{\SY}} $. Therefore, we deduce the following.

\begin{thm} [\cite{Piy2}]
\label{generalcohomoFMT}
 If $\lx \in \NS_{\QQ}(X)$ is an ample class then 
$$
 e^{- D_{\SY}} \, \Phi_{\eE}^{\SH} \, e^{-D_{\SX}} ( e^{\ell_{\SX}}) =  (r \, {\lx^g}/{g!}) \,  e^{-\ell_{\SY}},
$$
for some ample class $\ly \in \NS_{\QQ}(Y)$, satisfying $({\lx^g}/g!)({\ly^g}/g!)=  1/r^2$.
Moreover,  for each $0 \le i \le g$,
\begin{equation*}
 e^{- D_{\SY}} \, \Phi_{\eE}^{\SH} \, e^{-D_{\SX}} \left( \frac{ \ell_{\SX}^i}{i!} \right) = \frac{(-1)^{g-i}  r \, \ell_{\SX}^g}{g! (g-i)!} \, \ly^{g-i}. 
\end{equation*}
\end{thm}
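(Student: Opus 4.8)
The plan is to reduce everything to the cohomological action of $\Xi$ already assembled just above the statement, by recording how the two twisting functors act on $H^{2*}_{\alg}$. Since $\ch\big((\eE_{X\times\{b\}})^*\big)=r\,e^{-D_{\SX}}$ and $\ch\big((\eE_{\{a\}\times Y})^*\big)=r\,e^{-D_{\SY}}$, tensoring by these duals multiplies a class by $r\,e^{-D_{\SX}}$ on $X$ and by $r\,e^{-D_{\SY}}$ on $Y$ respectively. Reading the composition $\Xi=\eE_{\{a\}\times Y}^*\circ\Phi_\eE^{\SX\to\SY}\circ\eE_{X\times\{b\}}^*$ from the inside out therefore gives, on cohomology,
\[
\Xi^{\SH}(-)=r\,e^{-D_{\SY}}\;\Phi_\eE^{\SH}\big(r\,e^{-D_{\SX}}\cdot(-)\big)=r^2\,e^{-D_{\SY}}\,\Phi_\eE^{\SH}\,e^{-D_{\SX}}(-),
\]
so the operator appearing in the theorem is exactly $r^{-2}\,\Xi^{\SH}$. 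Thus it suffices to compute $\Xi^{\SH}$ and divide by $r^2$.

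For this I would use the factorization $\Xi=\Phi_{\Sigma^*\pP^\vee}^{\SHY\to\SY}\circ\Gamma$ and feed in the two computations established above. First, $\Gamma^{\SH}(e^{\lx})=r\,e^{\ell_{\SHY}}$ with $\ell_{\SHY}$ an ample class on $\HY$ satisfying $\ell_{\SHY}^g=r^2\,\lx^g$. Second, applying Lemma \ref{classicalcohomoFMT} to the Poincar\'e-type transform $\Phi_{\Sigma^*\pP^\vee}^{\SHY\to\SY}$ yields $\Phi_{\Sigma^*\pP^\vee}^{\SH}(e^{\ell_{\SHY}})=(\ell_{\SHY}^g/g!)\,e^{-\ly}$ for an ample class $\ly\in\NS_{\QQ}(Y)$, with $(\ell_{\SHY}^g/g!)(\ly^g/g!)=1$. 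Composing and substituting $\ell_{\SHY}^g=r^2\lx^g$ gives $\Xi^{\SH}(e^{\lx})=r\,(\ell_{\SHY}^g/g!)\,e^{-\ly}=(r^3\lx^g/g!)\,e^{-\ly}$, and dividing by $r^2$ produces the first displayed identity. The normalization is then bookkeeping: from $(\ell_{\SHY}^g/g!)(\ly^g/g!)=1$ and $\ell_{\SHY}^g=r^2\lx^g$ one reads off $(\lx^g/g!)(\ly^g/g!)=r^{-2}$, while the ampleness of $\ly$ is delivered by the same application of Lemma \ref{classicalcohomoFMT}.

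For the graded ``moreover'' clause I would rerun the identical composition degree by degree rather than extracting it from the exponential, using the graded forms already at hand: $\Gamma^{\SH}(\lx^i)=r\,\ell_{\SHY}^i$ together with the graded clause $\Phi_{\Sigma^*\pP^\vee}^{\SH}(\ell_{\SHY}^i/i!)=\tfrac{(-1)^{g-i}\ell_{\SHY}^g}{g!(g-i)!}\,\ly^{g-i}$. This gives $\Xi^{\SH}(\lx^i/i!)=\tfrac{(-1)^{g-i}r^3\lx^g}{g!(g-i)!}\,\ly^{g-i}$ after substituting $\ell_{\SHY}^g=r^2\lx^g$, and dividing by $r^2$ yields the stated formula. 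As a consistency check, summing over $i$ and reindexing by $j=g-i$ recovers the first identity, since $\sum_j\tfrac{(-1)^j r\lx^g}{g!\,j!}\,\ly^{j}=(r\lx^g/g!)\,e^{-\ly}$.

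The conceptually substantive inputs are not in this synthesis but in the preceding construction of $\Gamma$: the identifications $\Gamma(\oO_x)=\oO_{Z_x}$ and $\Gamma(\oO_{\widehat y})=\oO_{Z_{\widehat y}}$ with lengths $r^3$ and $r$, the resulting preservation $\Gamma(\Coh_i(X))\subset\Coh_i(\HY)$, and the Nakai--Moishezon verification that $\ell_{\SHY}$ is ample; these are what force $\Gamma^{\SH}(e^{\lx})=r\,e^{\ell_{\SHY}}$ with the correct normalization. Granting those, the only point in the present argument needing genuine care is tracking the factor $r^2$ coming from the two dual twists, so that the operator $e^{-D_{\SY}}\Phi_\eE^{\SH}e^{-D_{\SX}}$ is correctly matched with $r^{-2}\Xi^{\SH}$; everything else is linear bookkeeping.
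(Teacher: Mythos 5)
Your proposal is correct and follows essentially the same route as the paper: Theorem \ref{generalcohomoFMT} is obtained there by composing the already-computed action $\Gamma^{\SH}(e^{\lx}) = r\, e^{\ell_{\SHY}}$ (with $\ell_{\SHY}^g = r^2 \lx^g$, ample by Nakai--Moishezon) with Lemma \ref{classicalcohomoFMT} applied to $\Phi_{\Sigma^*\pP^\vee}^{\SHY \to \SY}$, using the factorization $\Xi = \Phi_{\Sigma^*\pP^\vee}^{\SHY \to \SY} \circ \Gamma$. Your explicit identification $\Xi^{\SH} = r^2\, e^{-D_{\SY}}\, \Phi_{\eE}^{\SH}\, e^{-D_{\SX}}$ via the Chern characters of the two dual twists is precisely the bookkeeping the paper leaves implicit in its ``Therefore, we deduce'' step.
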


\begin{thm}[\cite{Piy2}]
\label{polarize}
If the ample line bundle $L$ defines a polarization on $X$,  then
the ample line bundle $\det (\Xi(L))^{-1}$ defines a polarization on $Y$.
\end{thm}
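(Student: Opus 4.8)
The plan is to read off the first Chern class of $\det(\Xi(L))$ directly from the exponential form of $\Xi^{\SH}$ and then invoke positivity. First I would record that, by the Grothendieck--Riemann--Roch compatibility of the cohomological Fourier--Mukai functors (and the fact that on an abelian variety the Mukai vector equals the Chern character), one has $\ch(\Xi(L)) = \Xi^{\SH}(\ch(L))$. Since $L$ is a line bundle, $\ch(L) = e^{c_1(L)} = e^{\lx}$, where $\lx := c_1(L)$ is the ample class giving the polarization on $X$. Feeding this into the computation of $\Xi^{\SH}(e^{\lx})$ recorded just before Theorem~\ref{generalcohomoFMT} (equivalently, combining Theorem~\ref{generalcohomoFMT} with the cohomological action of the two rank-$r$ tensoring functors) yields $\ch(\Xi(L)) = c\,e^{-\ly}$ for the constant $c := r^3\lx^g/g!$ and some ample class $\ly \in \NS_{\QQ}(Y)$. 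The constant $c$ is strictly positive, because $r>0$ and $\lx$ is ample on the $g$-dimensional variety $X$, so $\lx^g>0$.

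Next I would extract the degree-one part. Expanding $e^{-\ly} = 1 - \ly + \tfrac12\ly^2 - \cdots$ gives $\ch_1(\Xi(L)) = -c\,\ly$. The key identity is that for any object $W \in D^b(Y)$ (automatically perfect, since $Y$ is smooth) the determinant line bundle satisfies $c_1(\det W) = \ch_1(W)$; this holds because $\det$ is multiplicative in exact triangles and $\ch_1$ is the alternating sum of the first Chern classes of the cohomology sheaves. Hence $c_1\big(\det(\Xi(L))^{-1}\big) = -\ch_1(\Xi(L)) = c\,\ly$, a strictly positive rational multiple of the ample class $\ly$. Using $\ch_1$ in this way also sidesteps any need to determine whether $\Xi(L)$ is concentrated in a single degree (a WIT-type analysis), since the identity holds for the complex regardless.

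Finally I would conclude ampleness. The class $c\,\ly$ is the first Chern class of the honest line bundle $\det(\Xi(L))^{-1}$, hence integral, and it is numerically a positive multiple of an ample class; by the Nakai--Moishezon criterion $\det(\Xi(L))^{-1}$ is therefore ample, so its first Chern class defines a polarization on $Y$. The genuine content lies entirely upstream: the ampleness of $\ly$, which is the output of Theorem~\ref{generalcohomoFMT} together with the Nakai--Moishezon argument used there, and the clean identity $c_1(\det(\Xi(L))) = \ch_1(\Xi(L))$ that lets one bypass the individual cohomology sheaves of $\Xi(L)$. Once the exponential form of $\Xi^{\SH}$ is in hand, I expect no further obstacle.
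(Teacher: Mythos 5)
Your proposal is correct and follows essentially the same route the paper takes: the statement is deduced directly from the computation $\Xi^{\SH}(e^{\lx}) = (r^3\lx^g/g!)\,e^{-\ly}$ recorded just before Theorem~\ref{generalcohomoFMT}, with $\ly$ ample by the Nakai--Moishezon argument and Lemma~\ref{classicalcohomoFMT}, so that $c_1(\det(\Xi(L))^{-1}) = (r^3\lx^g/g!)\,\ly$ is a positive multiple of an ample class. Your explicit remark that $c_1(\det W)=\ch_1(W)$ holds for complexes, bypassing any WIT-type analysis of the cohomology sheaves of $\Xi(L)$, is a useful point the paper leaves implicit.
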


Let us introduce the following notation:
\begin{nota}
\rm
Let $B, \lx \in \NS_{\QQ}(X)$. For $E \in D^b(X)$, the entries $v^{B,{\lx}}_i(E)$, $i=0, \ldots, g$ are defined by
$$
v^{B,{\lx}}_i(E) = i! \, \lx^{g-i} \cdot \ch^B_{i}(E).
$$
Here $\ch^B_{i}(E)$ is the $i$-th component of the $B$-twisted Chern character $\ch^B(E)$ defined by $\ch^B(E) = e^{-B} \ch(E)$. 
The vector $v^{B,{\lx}}(E)$ is defined by 
\begin{equation*}
v^{B,{\lx}}(E) = \left( v^{B,{\lx}}_0(E) , \ldots, v^{B,{\lx}}_g(E) \right).
\end{equation*}
\end{nota}

\begin{thm}[\cite{Piy2}]
\label{antidiagonalrep}
If we consider $v^{-D_{\SX} ,\lx}, v^{D_{\SY},\ly}$ as column vectors, then 
$$
v^{D_{\SY},\ly}\left(\Phi_\eE^{\SX  \to \SY}(E) \right) = \frac{g!}{r \, \ell_{\SX}^g} \,  \Adiag\left(1,-1,\ldots, (-1)^{g-1}, (-1)^g\right)  \ v^{-D_{\SX}, \lx}(E).
$$
\end{thm}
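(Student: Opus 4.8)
The plan is to recast the claim in terms of the twisted Chern characters defining $v^{B,\lx}$, so that the transform becomes exactly the operator $T \cneq e^{-D_{\SY}}\,\Phi^{\SH}_{\eE}\, e^{-D_{\SX}}$ of Theorem~\ref{generalcohomoFMT}. Writing $F = \Phi_\eE^{\SX\to\SY}(E)$ and using that on an abelian variety the Mukai vector is $\ch$, so $\ch(F) = \Phi^{\SH}_{\eE}(\ch E)$, the relations $\ch^{D_{\SY}} = e^{-D_{\SY}}\ch$ and $\ch(E) = e^{-D_{\SX}}\ch^{-D_{\SX}}(E)$ give $\ch^{D_{\SY}}(F) = T\bigl(\ch^{-D_{\SX}}(E)\bigr)$. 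Since $\ell_{\SY}^{g-i}$ has degree $2(g-i)$, pairing against it extracts the degree-$2i$ part, so $v^{D_{\SY},\ly}_i(F) = i!\int_Y \ell_{\SY}^{g-i}\,T\bigl(\ch^{-D_{\SX}}(E)\bigr)$. Thus the asserted matrix identity is equivalent to the scalar family
\begin{equation}
\int_Y \ell_{\SY}^{g-i}\, T(w) \;=\; \frac{(-1)^i (g-i)!\,g!}{r\,i!\,\ell_{\SX}^g}\int_X \ell_{\SX}^{i}\, w, \qquad w \in H^{2*}_{\alg}(X,\QQ), \tag{$\star$}
\end{equation}
for $0\le i\le g$; the right-hand integral automatically isolates $\ch^{-D_{\SX}}_{g-i}(E)$, so its value is $\tfrac{(-1)^i g!}{r\,\ell_{\SX}^g}\cdot\tfrac{1}{i!}\,v^{-D_{\SX},\lx}_{g-i}(E)$, which reproduces the $\Adiag$ entry after multiplying by $i!$.

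The heart of the proof is $(\star)$, and the crucial point is that it cannot be read off from Theorem~\ref{generalcohomoFMT} alone: that theorem evaluates $T$ only on the powers $\ell_{\SX}^i$, whereas $(\star)$ must hold for every class $w$. To bridge this gap I would use the Mukai-pairing isometry~\eqref{isometry}. Because $\Phi_\eE^{\SX\to\SY}$ is an equivalence, its adjoint kernel satisfies $\Phi^{\SH}_{\eE_L} = (\Phi^{\SH}_{\eE})^{-1}$, so~\eqref{isometry} upgrades to a genuine isometry $\langle \Phi^{\SH}_{\eE}p, \Phi^{\SH}_{\eE}q\rangle_{\SY} = \langle p,q\rangle_{\SX}$. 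Since multiplication by $e^{-D}$ also preserves the Mukai pairing (the factors $e^{D}$ and $e^{-D}$ cancel after dualizing), the composite $T$ is itself a Mukai isometry. I would then take the pure class $b = (-1)^{g-i+1}\ell_{\SY}^{g-i}$, for which $\langle b, T(w)\rangle_{\SY} = \int_Y \ell_{\SY}^{g-i}\, T(w)$, and transport it to $X$ via $\int_Y \ell_{\SY}^{g-i}\, T(w) = \langle b, T(w)\rangle_{\SY} = \langle T^{-1}(b), w\rangle_{\SX}$. Inverting the identity $T(\ell_{\SX}^i) = \tfrac{i!\,(-1)^{g-i} r\,\ell_{\SX}^g}{g!(g-i)!}\,\ell_{\SY}^{g-i}$ of Theorem~\ref{generalcohomoFMT} yields $T^{-1}(\ell_{\SY}^{g-i}) = \tfrac{g!(g-i)!\,(-1)^{g-i}}{i!\,r\,\ell_{\SX}^g}\,\ell_{\SX}^i$, and substituting this (together with $(\ell_{\SX}^i)^* = (-1)^i\ell_{\SX}^i$) into $\langle T^{-1}(b),w\rangle_{\SX} = -\int_X (T^{-1}b)^*\,w$ produces exactly $(\star)$. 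Note that, because $T^{-1}(b)$ is a pure power $\propto\ell_{\SX}^i$, the pairing $\langle T^{-1}(b), w\rangle_{\SX}$ already annihilates every graded piece of $w$ except the degree-$2(g-i)$ one, which is why a single pure test class $b$ suffices and no twist bookkeeping survives.

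The main technical obstacle is therefore the passage from ``known on powers of $\ell_{\SX}$'' to ``known on all classes'', which is precisely what forces the isometry rather than a naive appeal to Theorem~\ref{generalcohomoFMT}; verifying that $T$ is a Mukai isometry (in particular that the adjoint kernel $\eE_L$ of the equivalence computes the inverse on cohomology, and that the exponential twists preserve $\langle\,\cdot\,,\cdot\,\rangle_{\SX}$) is where the real content sits. The remaining work is sign bookkeeping with the Mukai duals $(\cdot)^*$, and observing that the scalar in $(\star)$ produced by inverting Theorem~\ref{generalcohomoFMT} is consistent with the normalization $(\ell_{\SX}^g/g!)(\ell_{\SY}^g/g!) = 1/r^2$. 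Finally, collecting the $g+1$ instances of $(\star)$ into one vector identity shows that $\Phi_\eE^{\SX\to\SY}$ acts on the vectors $v^{-D_{\SX},\lx}$ and $v^{D_{\SY},\ly}$ through the single anti-diagonal matrix $\tfrac{g!}{r\,\ell_{\SX}^g}\,\Adiag(1,-1,\dots,(-1)^{g-1},(-1)^g)$, completing the proof.
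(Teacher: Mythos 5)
Your proof is correct, and it is essentially the paper's own argument: although the paper defers the proof of Theorem \ref{antidiagonalrep} to \cite{Piy2}, the two ingredients you isolate --- the Mukai-pairing adjunction \eqref{isometry}, upgraded to a genuine isometry via $\Phi^{\SH}_{\eE_L} = (\Phi^{\SH}_{\eE})^{-1}$ and the compatibility of the pairing with the twists $e^{-D_{\SX}}$, $e^{-D_{\SY}}$, followed by Theorem \ref{generalcohomoFMT} --- are exactly what the paper runs in Section 4 to establish Lemma \ref{FMTact}, which is the generating-function form of the anti-diagonal statement (expanding both sides of $\Phi_{\eE}^{\SX \to \SY} \cdot Z^{\SX}_{-D_{\SX}+u\lx} = \zeta\, Z^{\SY}_{D_{\SY}-\ly/u}$ in powers of $u$ and matching coefficients recovers precisely your family $(\star)$). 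The only difference is packaging: you test $T(w)$ against the pure classes $\ell_{\SY}^{g-i}$ and pull them back through $T^{-1}$, while the paper pushes the exponential class $e^{-D_{\SX}+u\lx}$ forward through $\Phi^{\SH}_{\eE}$; these are dual versions of the same manipulation, and your observation that Theorem \ref{generalcohomoFMT} alone does not suffice without the isometry is exactly the right point.
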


%
\section{Stability Conditions Under  Fourier-Mukai Transforms}
This section generalizes some of the similar results in \cite{MP2, Piy}.
Recall that a Bridgeland stability condition $\sigma$ on a triangulated category
$\dD$ consists of a stability function $Z$ together with a slicing $\pP$
of $\dD$ satisfying certain axioms. Equivalently, one can define
$\sigma$ by giving a bounded t-structure on $\dD$ together with a
stability function $Z$ on the corresponding heart $\aA$ satisfying the
Harder-Narasimhan property. Then $\sigma$ is usually written as the pair $(Z, \pP)$
or $(Z, \aA)$. 

Let $\Upsilon:   \dD \to \dD'$ be an equivalence of triangulated categories, and let $W: K(\dD) \to \CC$ be a group homomorphism. Then
$$
\left( \Upsilon \cdot W \right) ([E]) = W \left( [ \Upsilon^{-1}(E) ] \right)
$$
defines an induced group morphism $ \Upsilon \cdot W$ in  $\Hom (K(\dD'),
\CC)$ by the equivalence $\Upsilon$. Moreover, this can be extended to a natural induced stability condition 
 on $\dD'$ by defining
$\Upsilon \cdot (Z, \aA) = (\Upsilon \cdot Z , \Upsilon(\aA))$.

Let $X, Y$ be two derived equivalent $g$-dimensional abelian varieties as in Section \ref{sec:FMTsetup}, which is given by the Fourier-Mukai transform $\Phi_{\eE}^{\SX \to \SY} : D^b(X) \to D^b(Y)$. 
Let $\lx \in \NS_{\QQ}(X)$ be an ample class on $X$ and let $\ly \in \NS_{\QQ}(Y)$ be the induced ample class on $Y$ as in Theorem \ref{generalcohomoFMT}. 

Let $u = \lambda e^{i \alpha}$ be a complex number. Consider the 
function $Z^{\SX}_{-D_{\SX}+u\lx}: K(X) \to \CC$ defined by 
$$
Z^{\SX}_{-D_{\SX}+u\lx}(E)= -\int_{X} e^{-\left( -D_{\SX}+u\lx \right)}\ch(E).
$$
For $E \in D^b(Y)$ we have
\begin{align*}
\left( \Phi_{\eE}^{\SX \to \SY}  \cdot Z^{\SX}_{-D_{\SX}+u\lx}\right)(E) 
& = \left\langle  e^{ -D_{\SX}+u\lx } , \ \ch\left(\left(\Phi_{\eE}^{\SX \to \SY}\right)^{-1}(E)\right)  \right\rangle_{\SX} \\
& = \left\langle    e^{ -D_{\SX}+ u\lx } , \ \left(\Phi_{\eE}^{\SH}\right)^{-1} \left(\ch(E)\right)  \right\rangle_{\SX} \\
& = \left\langle  \Phi_{\eE}^{\SH} \left( e^{ -D_{\SX}+ u\lx }\right) , \ \ch(E)  \right\rangle_{\SY}\\
& = \left\langle e^{D_{\SY}} \left( e^{-D_{\SY}}  \Phi_{\eE}^{\SH} e^{ -D_{\SX}}\right) (e^{ u\lx })  , \ \ch(E)  \right\rangle_{\SY} \\
& =  (r \, \lx^g u^g /g!) \ \left \langle   e^{D_{\SY} -\ly/u}  , \ \ch(E)  \right\rangle_{\SY},
\end{align*}
 since by Theorem \ref{generalcohomoFMT},   
$e^{-D_{\SY}}  \Phi_{\eE}^{\SH} e^{ -D_{\SX}}(e^{ u\lx }) = (r \, \lx^g u^g /g!) \, e^{-\ly/u} $. So we have the following relation:

\begin{lem}
\label{FMTact}
We have $\Phi_{\eE}^{\SX \to \SY}  \cdot Z^{\SX}_{-D_{\SX}+u\lx} = \zeta\ Z^{\SY}_{D_{\SY} - \ly/u}(E)$, for $\zeta = r \, \lx^g u^g /g!$.
\end{lem}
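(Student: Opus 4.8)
The plan is to reduce the claimed identity of central-charge functions to a single cohomological computation, using that on an abelian variety the charge $Z^{\SX}_\Omega$ is nothing but a Mukai pairing against $e^\Omega$. Indeed, since $c_1(X)=0$ and $\td_X=1$, the pairing collapses to $\langle v,w\rangle_{\SX} = -\int_X v^{*}\cdot w$, and because $(e^\Omega)^{*}=e^{-\Omega}$ for any class $\Omega$ of pure degree two, one checks at once that
$$
Z^{\SX}_{-D_{\SX}+u\lx}(F) = \left\langle e^{-D_{\SX}+u\lx},\, \ch(F) \right\rangle_{\SX}
$$
for all $F \in D^b(X)$, and symmetrically on $Y$. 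This reformulation is precisely what lets the isometry \eqref{isometry} enter.

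First I would unwind the induced charge: by $(\Upsilon \cdot W)([E]) = W([\Upsilon^{-1}(E)])$ with $\Upsilon = \Phi_{\eE}^{\SX \to \SY}$, evaluating the left-hand side at $E \in D^b(Y)$ gives $Z^{\SX}_{-D_{\SX}+u\lx}\bigl((\Phi_{\eE}^{\SX \to \SY})^{-1}(E)\bigr)$, which by the previous paragraph equals $\langle e^{-D_{\SX}+u\lx}, \ch((\Phi_{\eE}^{\SX \to \SY})^{-1}(E))\rangle_{\SX}$. Next I would push the Chern character through the inverse using the Grothendieck--Riemann--Roch square, so that $\ch \circ (\Phi_{\eE}^{\SX \to \SY})^{-1} = (\Phi_{\eE}^{\SH})^{-1}\circ \ch$, and then move $(\Phi_{\eE}^{\SH})^{-1}$ to the other slot of the pairing via \eqref{isometry} together with symmetry of the Mukai form (for an equivalence the cohomological inverse is the adjoint $\Phi_{\eE_L}^{\SH}$). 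This lands the expression at $\langle \Phi_{\eE}^{\SH}(e^{-D_{\SX}+u\lx}), \ch(E)\rangle_{\SY}$.

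The computational core is to evaluate $\Phi_{\eE}^{\SH}(e^{-D_{\SX}+u\lx})$, which I would do by inserting $e^{D_{\SY}}e^{-D_{\SY}}$ and rewriting it as $e^{D_{\SY}}\bigl(e^{-D_{\SY}}\Phi_{\eE}^{\SH}e^{-D_{\SX}}\bigr)(e^{u\lx})$ so as to invoke Theorem \ref{generalcohomoFMT}. Since that theorem is stated for the ample class $\lx$ whereas here the input is scaled by the complex number $u$, I would use its per-degree form: expanding $e^{u\lx}=\sum_{i=0}^{g} u^i \lx^i/i!$ and applying the transform term by term gives
$$
\bigl(e^{-D_{\SY}}\Phi_{\eE}^{\SH}e^{-D_{\SX}}\bigr)(e^{u\lx}) = \frac{r\,\lx^g}{g!}\sum_{i=0}^{g}\frac{(-1)^{g-i}u^i}{(g-i)!}\,\ly^{g-i} = \frac{r\,\lx^g u^g}{g!}\,e^{-\ly/u},
$$
the last equality after reindexing $j=g-i$. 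Multiplying by $e^{D_{\SY}}$ and inserting into the pairing yields $(r\,\lx^g u^g/g!)\langle e^{D_{\SY}-\ly/u}, \ch(E)\rangle_{\SY}$, which by the first paragraph equals $\zeta\,Z^{\SY}_{D_{\SY}-\ly/u}(E)$ with $\zeta = r\,\lx^g u^g/g!$, as claimed.

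The step I expect to demand the most care is this complex rescaling: Theorem \ref{generalcohomoFMT} delivers the \emph{real} ample class $\ly$ attached to the real ample input $\lx$, and one must verify that its linear, term-by-term extension to the complex class $u\lx$ honestly reassembles into the exponential $e^{-\ly/u}$ (the reindexing producing exactly the powers $(-\ly/u)^j/j!$), and that dividing by $u$ is harmless since $u=\lambda e^{i\alpha}\ne 0$. Every other ingredient---the identification of charges with Mukai pairings, the Grothendieck--Riemann--Roch diagram, and the isometry---is quoted directly from the preliminaries, so the argument is otherwise purely formal.
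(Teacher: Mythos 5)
Your proof is correct and takes essentially the same route as the paper's: rewriting the induced charge as a Mukai pairing, pushing $\ch$ through the inverse via the Grothendieck--Riemann--Roch square, transferring $\left(\Phi_{\eE}^{\SH}\right)^{-1}$ to the other slot via the isometry \eqref{isometry}, and evaluating $e^{-D_{\SY}}\,\Phi_{\eE}^{\SH}\,e^{-D_{\SX}}(e^{u\lx})$ by Theorem \ref{generalcohomoFMT}. The only difference is that you spell out the term-by-term complex-linear extension and reindexing that yields $e^{-\ly/u}$, a step the paper treats as immediate.
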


Assume there exist a stability condition for any complexified ample class  $-D_{\SX}+u\lx$ with a heart $\aA^{\SX}_{-D_{\SX}+u\lx}$ and a slicing $\pP^{\SX}_{-D_{\SX}+u\lx}$ associated to the central charge function $Z^{\SX}_{-D_{\SX}+u\lx}$.  
From Lemma \ref{FMTact} for any $\phi \in \RR$,  \ $\zeta \, Z^{\SY}_{D_{\SY} - \ly/u}\left( \Phi_{\eE}^{\SX \to \SY}\left( \pP^{\SX}_{-D_{\SX}+u\lx}(\phi) \right)\right) \subset \RR_{>0} e^{i\pi \phi}$; that is 
$$
Z^{\SY}_{D_{\SY} - \ly/u}\left( \Phi_{\eE}^{\SX \to \SY}\left(  \pP^{\SX}_{-D_{\SX}+u\lx}(\phi)  \right)\right)  \subset \RR_{>0} e^{i\left(\pi \phi - \arg(\zeta)\right)}.
$$
So  we would expect
$$
 \Phi_{\eE}^{\SX \to \SY}\left( \pP^{\SX}_{-D_{\SX}+u\lx} (\phi) \right) = \pP^{\SY}_{D_{\SY}- \ly/u} \left( \phi - \frac{\arg (\zeta)}{\pi}\right),
$$
and so 
$$
 \Phi_{\eE}^{\SX \to \SY}\left( \pP^{\SX}_{-D_{\SX}+u\lx} ((0,\,1])\right) =  \pP^{\SY}_{D_{\SY}- \ly/u} \left(\left( -\frac{\arg (\zeta)}{\pi},  \, -\frac{\arg (\zeta)}{\pi} +1\right]\right).
$$
For $0\le \alpha <1$, 
$  \pP^{\SY}_{D_{\SY}- \ly/u} ((\alpha, \alpha+1]) =\left \langle   \pP^{\SY}_{D_{\SY}- \ly/u} ((0, \alpha])\,[1] , \,   \pP^{\SY}_{D_{\SY}- \ly/u}  ((\alpha, 1]) \right \rangle$ is a tilt of $  \aA^{\SY}_{D_{\SY}- \ly/u}  =   \pP^{\SY}_{D_{\SY}- \ly/u} ((0,1])$
 associated to a torsion theory coming from $Z^{\SY}_{D_{\SY}- \ly/u} $ stability. 
Therefore, one would expect $  \Phi_{\eE}^{\SX \to \SY} \left( \aA^{\SX}_{-D_{\SX} + u\lx } \right)$  is a
tilt of $\aA^{\SY}_{D_{\SY}- \ly/u}$ associated to a torsion theory coming from $Z^{\SY}_{D_{\SY}- \ly/u}$ stability, up to shift. 

Moreover, for the Fourier-Mukai transform $\Phi_{\eE}^{\SX \to \SY}$ when $\zeta$ is real, that is, 
$u^g \in \RR$, 
we would expect the following equivalence:

\begin{conj}
\label{conjequivalence}
The Fourier-Mukai transform $\Phi_{\eE}^{\SX \to \SY}: D^b(X) \to D^b(Y)$ gives the equivalence of stability condition hearts conjecturally constructed in Conjecture \ref{conjstab}:
$$
\Phi_{\eE}^{\SX \to \SY} [k]  \left(  \aA^{\SX}_{\Omega_k}  \right) = \aA^{\SY}_{\Omega'_k}.
$$
Here  $\Omega_k = -D_{\SX} + \lambda e^{i k\pi/g }\, \lx $ and $\Omega'_k = D_{\SY}  - e^{-i k \pi/g} \, \ly/\lambda$
are complexified ample classes on $X$ and $Y$ respectively, for   $k \in \{1, 2, \ldots, (g-1)\}$, and some $ \lambda \in \RR_{>0}$. 
\end{conj}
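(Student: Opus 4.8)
The plan is to establish the two provable cases $g=2$ and $g=3$ and, along the way, to isolate the numerical and categorical inputs that a general proof would require. The numerical skeleton comes from Lemma~\ref{FMTact}: with $u=\lambda e^{ik\pi/g}$ one has $u^g=(-1)^k\lambda^g\in\RR$, so $\zeta=r\,\lx^g u^g/g!$ is a nonzero real number of sign $(-1)^k$, and $D_{\SY}-\ly/u=\Omega_k'$. Hence $\Phi_{\eE}^{\SX\to\SY}$ intertwines the central charges $Z^{\SX}_{\Omega_k}$ and $Z^{\SY}_{\Omega_k'}$ up to the real scalar $\zeta$. This already forces the \emph{parity} of the homological shift: a shift by $[k]$ multiplies the central charge by $(-1)^k$, matching the sign of $\zeta$, so $\Phi_{\eE}^{\SX\to\SY}[k]$ carries $\aA^{\SX}_{\Omega_k}$ into an integer shift of $\aA^{\SY}_{\Omega_k'}$ of the correct parity. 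What the central charge cannot see---and what the theorem must supply---is that this integer shift is exactly zero, i.e.\ that the transported heart lands precisely in $\aA^{\SY}_{\Omega_k'}$ rather than in a nontrivial shift of it.

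To control the hearts themselves I would invoke Theorem~\ref{antidiagonalrep}, which records that $\Phi_{\eE}^{\SX\to\SY}$ sends $v^{-D_{\SX},\lx}$ to $v^{D_{\SY},\ly}$ with its components reversed and signs alternated. Since each heart $\aA^{\SX}_{\Omega_k}$ and $\aA^{\SY}_{\Omega_k'}$ is the $g$-fold tilt of Conjecture~\ref{conjstab}, built as a tower of tilts governed by the truncated charges $Z^{(1)},\dots,Z^{(g)}$, this reversal should interchange low- and high-degree truncations and thereby match each tilting step on $X$ with the complementary step on $Y$. This is the structural reason the two towers correspond under $\Phi_{\eE}^{\SX\to\SY}$, and it is exactly the ingredient that replaces the explicit Picard-rank-one cohomological computation of \cite{MP1,MP2} and removes that hypothesis.

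The substance is then to show, by induction on the tilting level, that $\Phi_{\eE}^{\SX\to\SY}$ carries semistable objects at each step to semistable objects (up to shift) for the complementary step, so that the torsion pairs producing consecutive tilts correspond. The base case is preservation of slope and Gieseker semistability under the Fourier--Mukai transform on abelian varieties via the WIT/IT formalism; this already settles $g=2$, $k=1$, where $\aA^{(2)}$ is a genuine Bridgeland heart (Bridgeland, Arcara--Bertram), $\Phi_{\eE}^{\SX\to\SY}$ interchanges the torsion pair defining the single tilt, and $[1]$ returns the image to $\aA^{\SY}_{\Omega_1'}$; this generalizes Yoshioka's argument \cite{Yos} and \cite[Chapter~6]{Piy}. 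For $g=3$, $k\in\{1,2\}$, one runs the same scheme through the double tilt: once $\aA^{(2)}$ and $\aA^{(3)}$ are available as (very weak, resp.\ Bridgeland) stability-condition hearts, the argument of \cite{MP1,MP2} applies with Theorem~\ref{antidiagonalrep} in place of the rank-one transform, and the resulting equivalence transports the Bridgeland stability condition from $X$ to $Y$, from which the inequality \eqref{BGineq} follows.

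The key technical device, and the main obstacle, is a bound on the cohomological amplitude of $\Phi_{\eE}^{\SX\to\SY}(E)$ for $E\in\aA^{\SX}_{\Omega_k}$: one must show that the sheaf cohomologies $H^i_{\Coh(Y)}(\Phi_{\eE}^{\SX\to\SY}(E))$ lie in a window of length $g$ that, after the shift $[k]$, sits inside $\aA^{\SY}_{\Omega_k'}$, together with the symmetric statement for the quasi-inverse $\Phi_{\Sigma^*\eE^\vee}^{\SY\to\SX}[g]$. Proving both inclusions gives equality, since an equivalence carrying one bounded heart into another is automatically an equivalence of hearts, and this is precisely what pins the integer shift to $[k]$ rather than merely fixing its parity. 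Two difficulties make this hard in general. First, the target hearts must exist at all, i.e.\ Conjecture~\ref{conjstab}; for threefolds this is equivalent to \eqref{BGineq}, so the argument is mildly circular and must be arranged so that the equivalence is used to \emph{deduce} the inequality by transport rather than to assume it. Second, pushing the induction past the first tilt requires preservation of tilt stability, and controlling the sheaf cohomology of $\Phi_{\eE}^{\SX\to\SY}(E)$ for tilt-semistable $E$ is substantially harder than the slope case; the length-$g$ amplitude window closes up cleanly only when $g\le 3$, and for $g\ge 4$ both this vanishing and the existence of the hearts remain open, which is why the general statement stays a conjecture.
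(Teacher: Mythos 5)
Your proposal follows essentially the same route as the paper: the numerical reduction via Lemma \ref{FMTact} (with $u^g=(-1)^k\lambda^g$ real), the use of Theorem \ref{antidiagonalrep} to replace the Picard-rank-one cohomological computation of \cite{MP1, MP2}, and the Yoshioka-style control of the sheaf cohomologies $H^i_{\Coh(Y)}(\Phi_{\eE}^{\SX \to \SY}(E))$ showing that the torsion pairs defining the successive tilts on $X$ and $Y$ correspond, with the symmetric argument for the quasi-inverse pinning down the shift $[k]$ --- exactly the paper's treatment of the surface and 3-fold cases, including deducing the Bogomolov--Gieseker inequality by transport and leaving $g \ge 4$ open. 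There is no substantive difference in approach.
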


\section{Equivalences of Stability Condition Hearts}
The main aim of this section is to show that Conjecture \ref{conjequivalence} is true for abelian surfaces and abelian 3-folds. In order to achieve this we study various stabilities of sheaves and complexes of them under the Fourier-Mukai transforms. 
\subsection{Abelian surfaces}
Let $X, Y$ be derived equivalent abelian surfaces  and let $\lx, \ly$ be ample classes on them respectively as in Theorem \ref{generalcohomoFMT}.
Let $\Psi$ be the Fourier-Mukai transform $\Phi_{\eE}^{\SX \to \SY}$ from $X$ to $Y$ with kernel  $\eE$, and let $\HPsi =  \Phi_{\Sigma^* \eE^\vee}^{\SY \to \SX}$. We have 
\begin{equation}
\label{imagesurface}
\Psi(\Coh(X)) \subset \langle \Coh(Y), \Coh(Y)[-1], \Coh(Y)[-2] \rangle,
\end{equation}
and similar relation for $\HPsi$.
Since 
 $\HPsi \circ \Psi \cong  [-2]$ and $\Psi \circ \HPsi \cong  [-2]$, 
 we have the following convergence of the spectral sequences.

\begin{equation}
\label{mukaispecseq}
\left.\begin{aligned}
 &  E_2^{p,q} = \HPsi^{p} \Psi^q(E) \Longrightarrow H^{p+q-2}_{\Coh(X)}(E) \\        
 &  E_2^{p,q} = \Psi^{p}\HPsi^q(E) \Longrightarrow H^{p+q-2}_{\Coh(Y)}(E)
       \end{aligned}
  \ \right\}.
\end{equation}
Here and elsewhere we write $\HPsi^{p}(E) = H^p_{\Coh(X)} (\HPsi(E))$ and $\Psi^{q}(E) = H^q_{\Coh(Y)} (\Psi(E))$. Immediately from the convergence of this spectral sequence for $E \in \Coh(X)$, we deduce that
\begin{itemize}[leftmargin=1cm]
\item $\HPsi \Psi^0(E) \in \Coh(X)[-2] $, and $\HPsi \Psi^2(E) \in  \Coh(X) $;
\item there is an injection $\HPsi^0\Psi^1(E) \hookrightarrow \HPsi^2\Psi^0(E)$, and
 a surjection $ \HPsi^0\Psi^2(E) \twoheadrightarrow \HPsi^2\Psi^1(E)$.
\end{itemize}

Let us recall the notation in Conjecture \ref{conjequivalence} for our derived equivalent abelian surfaces. 
Consider  the complexified ample classes $\Omega = -D_{\SX} + i \lambda \lx$, 
$\Omega' = D_{\SY} + i   \ly/\lambda $ on $X$, $Y$ respectively. 
The function defined by $Z^{(1)}_{\Omega} = -i \int_{X} e^{- \Omega } (\ch_0, \ch_1, 0)$ together with the 
standard heart $\Coh(X)$ defines a very weak stability condition $\sigma_1$ on $D^b(X)$. 
Define the subcategories 
$$
\fF^{\SX} = \pP^{\SX}_{\sigma_1}((0,\, 1/2]), \ \ \tT^{\SX}= \pP^{\SX}_{\sigma_1}((1/2 ,\, 1])
$$
of $\Coh(X)$ in terms of the associated slicing $\pP^{\SX}_{\sigma_1}$.
Then the Bridgeland  stability condition heart in Conjecture \ref{conjequivalence} is 
$$
\aA^{\SX} = \langle \fF^{\SX}[1] , \tT^{\SX} \rangle = \pP^{\SX}_{\sigma_1}((1/2 ,\, 3/2]).
$$
We consider similar subcategories  associated to $\Omega'$ on $Y$. 

Let us sketch the proof of the equivalence in Conjecture \ref{conjequivalence}. This is essentially a generalization of Yoshioka's in \cite{Yos}, and closely follow the one in the author's PhD thesis \cite[Section 6]{Piy}.

We need the following results about cohomology sheaves of the images under the Fourier-Mukai transforms.

\begin{prop}[\cite{Piy2}]
Let $E \in \Coh(X)$. Then we have the following:
\begin{enumerate}[label=\arabic*.]
\item (i) If $E \in \tT^{\SX}$ then $\Psi^2(E) = 0$, and (ii) if $E \in  \fF^{\SX}$ then $\Psi^0(E) = 0$.
\item (i) $\Psi^2(E) \in \tT^{\SY}$, and (ii) $\Psi^0(E) \in \fF^{\SY}$.
\item (i) if $E \in \tT^{\SX}$ then $\Psi^1(E) \in \tT^{\SY}$, and (ii) if $E \in \fF^{\SX}$ then $\Psi^1(E) \in \fF^{\SY}$.
\end{enumerate}
\end{prop}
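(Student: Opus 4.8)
The plan is to reduce all three statements to the case of a single $\sigma_1^{\SX}$-semistable sheaf and then to read the cohomological amplitude of $\Psi(E)$ off a slope comparison. The key preliminary observation is that, by \eqref{semihomochern}, each restriction $\eE_{X\times\{y\}}$ is a simple semihomogeneous bundle with $\ch(\eE_{X\times\{y\}})=r\,e^{D_{\SX}}$, so $\ch^{-D_{\SX}}(\eE^\vee_{X\times\{y\}})=e^{D_{\SX}}\!\cdot r\,e^{-D_{\SX}}=r$; a direct check then shows $Z^{(1)}_{\Omega}(\eE^\vee_{X\times\{y\}})$ is purely imaginary with positive imaginary part, i.e.\ $\eE^\vee_{X\times\{y\}}$ is $\sigma_1^{\SX}$-semistable of slope exactly $0$ (phase $1/2$), sitting on the boundary of $\fF^{\SX}$. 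Since $\tT^{\SX}$ and $\fF^{\SX}$ are closed under quotients and subobjects respectively and both under extensions, Harder--Narasimhan filtrations reduce the slope comparisons to $\sigma_1^{\SX}$-semistable $E$; torsion sheaves all lie in $\tT^{\SX}$ and are disposed of directly, as $\Psi^2$ of a sheaf supported in dimension $\le 1$ vanishes by base change.

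For part~1 I would compute the fibres of $\Psi^i(E)$ by cohomology and base change: as $\eE$ is locally free and $p_1^\ast E$ is flat over $Y$, the sheaf $\Psi^i(E)=R^ip_{2\ast}(\eE\otimes p_1^\ast E)$ vanishes once $H^i(X,E\otimes\eE_{X\times\{y\}})=0$ for every $y$. Serre duality on the abelian surface $X$ (where $\omega_X=\oO_X$) gives $H^2(X,E\otimes\eE_{X\times\{y\}})\cong\Hom_{\SX}(E,\eE^\vee_{X\times\{y\}})^\ast$ and $H^0(X,E\otimes\eE_{X\times\{y\}})\cong\Hom_{\SX}(\eE^\vee_{X\times\{y\}},E)$. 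Because $\eE^\vee_{X\times\{y\}}$ is semistable of slope $0$, the vanishing of $\Hom$ between semistable objects of strictly decreasing slope yields $\Psi^2(E)=0$ for $E\in\tT^{\SX}$ (slope $>0$) and $\Psi^0(E)=0$ for $E\in\fF^{\SX}$ of slope $<0$.

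For part~2 I would exploit the adjunction coming from $\HPsi\circ\Psi\cong[-2]$, i.e.\ $\Psi^{-1}=\HPsi[2]$. Since $\Psi(E)$ lives in degrees $[0,2]$ by \eqref{imagesurface}, for any sheaf $T$ one has $\Hom_{\SY}(T,\Psi^0(E))=\Hom_{\SY}(T,\Psi(E))$, and transporting across the equivalence collapses the spectral sequence to the single term $\Hom_{\SY}(T,\Psi^0(E))\cong\Hom_{\SX}(\HPsi^2(T),E)$. The $\HPsi$-analogue of part~1(i) gives $\HPsi^2(T)=0$ for $T\in\tT^{\SY}$, whence $\Psi^0(E)\in\fF^{\SY}$; the dual computation $\Hom_{\SY}(\Psi^2(E),F)\cong\Hom_{\SX}(E,\HPsi\,F)=0$ for $F\in\fF^{\SY}$ (using $\HPsi^0 F=0$) gives $\Psi^2(E)\in\tT^{\SY}$. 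Thus parts~1 and~2 reduce entirely to Serre duality and slope vanishing, applied symmetrically to $\Psi$ and $\HPsi$, and hold for all $E$.

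The genuinely delicate point, where I expect the main obstacle, is part~3 together with the boundary slope-$0$ case left open in part~1(ii). For $E\in\tT^{\SX}$ we now know $\Psi^2(E)=0$ and $\Psi^0(E)\in\fF^{\SY}$, so $\Psi(E)$ is the two-term complex $[\Psi^0(E)\to\Psi^1(E)]$, and I would constrain $\Psi^1(E)$ by feeding this back through \eqref{mukaispecseq}: its degeneration forces $\HPsi^2\Psi^1(E)=0$, the injection $\HPsi^0\Psi^1(E)\hookrightarrow\HPsi^2\Psi^0(E)$ of $\HPsi^0\Psi^1(E)$ into the $\tT^{\SX}$-object $\HPsi^2\Psi^0(E)$ (part~2(i) for $\HPsi$), and a filtration of $E$ with graded pieces $\HPsi^1\Psi^1(E)$ and a cokernel term. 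The difficulty is that the slope count via Theorem~\ref{antidiagonalrep} controls only total twisted degrees, not the Harder--Narasimhan minimum of $\Psi^1(E)$, while the clean homological identity $\Hom_{\SY}(\Psi^1(E),F)\cong\Hom_{\SX}(E,\HPsi^1(F))$ for $F\in\fF^{\SY}$ appears to require the $\HPsi$-version of part~3 itself. I would therefore settle part~3 and the slope-$0$ boundary simultaneously by a bootstrapping argument across the two transforms (equivalently, a minimal-counterexample analysis), in which the slope-$0$ semistable objects---being built from the bundles $\eE^\vee_{X\times\{y\}}$, which by the dictionary carry the class of a point and transform to skyscrapers placed in degree $2$---are treated explicitly.
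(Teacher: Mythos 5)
Your parts 1(i) and 2(ii) are sound and match the skeleton of the route the paper points to (the paper itself gives no proof, citing \cite{Piy2}, Yoshioka \cite{Yos} and \cite[Chapter 6]{Piy}, and recording the consequences of the spectral sequences \eqref{mukaispecseq} as its intended ingredients): Serre duality with $\omega_X \cong \oO_X$, $\mu$-semistability of the simple semihomogeneous bundles $\eE^\vee_{X\times\{y\}}$ at twisted slope $0$, Hom-vanishing against HN factors of strictly bigger slope, and top-degree base change give $\Psi^2(E)=0$ for $E\in\tT^{\SX}$; the collapsed adjunction $\Hom_{\SY}(T,\Psi^0(E))\cong\Hom_{\SX}(\HPsi^2(T),E)$ then gives $\Psi^0(E)\in\fF^{\SY}$. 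But the proposal has two genuine gaps, which you yourself flag. First, 1(ii) is only proved when every HN factor of $E$ has slope strictly negative: at slope $0$ the pointwise vanishing $\Hom_{\SX}(\eE^\vee_{X\times\{y\}},E)=0$ simply fails (take $E=\eE^\vee_{X\times\{y_0\}}$, which lies in $\fF^{\SX}$ and has $\Psi(E)\cong\oO_{y_0}[-2]$), so one must instead show that the jumping locus $\{y\in Y:\Hom_{\SX}(\eE^\vee_{X\times\{y\}},E)\neq 0\}$ is a proper closed subset and that generic fibrewise vanishing already forces $R^0p_{2*}=0$; this requires $\mu$-stability (not merely semistability) of the bundles $\eE^\vee_{X\times\{y\}}$ and a finiteness argument inside the torsion-free sheaf $E$, none of which appears in the proposal. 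Moreover, since your proof of 2(i) is the adjunction $\Hom_{\SY}(\Psi^2(E),F)\cong\Hom_{\SX}(E,\HPsi^0(F))$, it needs the $\HPsi$-analogue of 1(ii) in full strength, so 2(i) inherits exactly the same gap.

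Second, part 3 is not proved at all. You correctly diagnose the circularity: when $\Psi^2(E)=0$ and $\HPsi^0(F)=0$, the identity $\Hom_{\SY}(\Psi^1(E),F)\cong\Hom_{\SX}(E,\HPsi^1(F))$ reduces 3(i) for $\Psi$ to 3(ii) for $\HPsi$ and vice versa, so formal adjunctions cannot close the loop. The arguments the paper refers to break it by an actual analysis of the slope-$0$ wall: one first shows that a $\mu$-semistable sheaf of twisted slope $0$ transforms to a complex concentrated in degrees $1$ and $2$ with $\Psi^2$ in $\Coh_0(Y)$ and $\Psi^1$ again semistable of slope $0$ (the semihomogeneous bundles going to skyscrapers being the extreme case), and then treats strictly positive and negative slopes by HN d\'evissage combined with the numerical control coming from Theorem \ref{antidiagonalrep}, which pins down $\ch(\Psi(E))$ and hence relates the ranks and twisted degrees of $\Psi^0(E)$, $\Psi^1(E)$, $\Psi^2(E)$, together with the injection and surjection extracted from \eqref{mukaispecseq}. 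Your closing sentence about a ``bootstrapping argument across the two transforms (equivalently, a minimal-counterexample analysis)'' names the right target but supplies no mechanism; as written, the mathematically substantive half of the proposition --- 1(ii) at slope $0$, 2(i), and all of part 3 --- is missing.
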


In other words, the results of the above proposition say
\begin{equation*}
\left.\begin{aligned}
 & \Psi (\tT^{\SX}) \subset \langle \fF^{\SY} , \tT^{\SY}[-1]  \rangle \\        
 &  \Psi (\fF^{\SX}) \subset \langle \fF^{\SY}[-1] , \tT^{\SY}[-2]  \rangle
       \end{aligned}
  \ \right\}.
\end{equation*}
Similar results hold for $\HPsi$. Since $\aA^{\SX} = \langle \fF^{\SX}[1] , \tT^{\SX} \rangle$ and  $\aA^{\SY} = \langle \fF^{\SY}[1] , \tT^{\SY} \rangle$,
we have $\Psi [1] (\aA^{\SX} ) = \aA^{\SY}$
as required.

\subsection{Abelian 3-folds}
Let $X, Y$ be derived equivalent abelian 3-folds  and let $\lx, \ly$ be ample classes on them respectively as in Theorem \ref{generalcohomoFMT}.
Let $\Psi$ be the Fourier-Mukai transform $\Phi_{\eE}^{\SX \to \SY}$ from $X$ to $Y$ with kernel  $\eE$, and let $\HPsi =  \Phi_{\Sigma^* \eE^\vee}^{\SY \to \SX}$. Then 
 $\HPsi \circ \Psi \cong  [-3] $ and $\Psi \circ \HPsi \cong  [-3] $.

Let us recall the notation in Conjecture \ref{conjequivalence} for our derived equivalent abelian 3-folds. 
Consider the complexified ample classes 
\begin{equation*}
\left.\begin{aligned}
 &\Omega = \left( -D_{\SX} +  \lambda \lx/2 \right)  + i \sqrt{3} \lambda \lx/2 \\        
 &  \Omega' = \left( D_{\SY} - \ly/(2 \lambda) \right) + i  \sqrt{3}  \ly/(2 \lambda)
       \end{aligned}
  \ \right\}
\end{equation*}
on $X$, $Y$ respectively. 
The function defined by $Z^{(1)}_{\Omega} =  \int_{X} e^{- \Omega } (\ch_0, \ch_1, 0,0)$ together with the 
standard heart $\Coh(X)$ defines a very weak stability condition $\sigma_1$ on $D^b(X)$. 
Define the subcategories 
$$
\fF^{\SX}_1= \pP^{\SX}_{\sigma_1}((0,\, 1/2]), \ \ \tT^{\SX}_1= \pP^{\SX}_{\sigma_1}((1/2 ,\, 1])
$$
of $\Coh(X)$ in terms of the associated slicing $\pP^{\SX}_{\sigma_1}$; and the corresponding tilt be
$$
\bB^{\SX} = \langle \fF^{\SX}_1[1] , \tT^{\SX}_1 \rangle = \pP^{\SX}_{\sigma_1}((1/2 ,\, 3/2]).
$$
Then as mentioned in Remark \ref{remarkconjstab}, due to \cite{BMT} the function defined by 
$$Z^{(2)}_{\Omega} = -i \int_{X} e^{- \Omega } (\ch_0, \ch_1, \ch_2, 0)$$
 together with the  heart $\bB^{\SX}$ defines a very weak stability condition $\sigma_2$ on $D^b(X)$. Define the subcategories 
$$
\fF^{\SX}_2 = \pP^{\SX}_{\sigma_2}((0,\, 1/2]), \ \ \tT^{\SX}_2= \pP^{\SX}_{\sigma_2}((1/2 ,\, 1])
$$
of $\bB^{\SX}$ in terms of the associated slicing $\pP^{\SX}_{\sigma_2}$.
Then the expected Bridgeland  stability condition heart in Conjecture \ref{conjstab} is 
$$
\aA^{\SX} = \langle \fF^{\SX}_2[1] , \tT^{\SX}_2 \rangle = \pP^{\SX}_{\sigma_2}((1/2 ,\, 3/2]).
$$
We consider similar subcategories on $Y$ associated to $\Omega'$. 

Under the Fourier-Mukai transform $\Psi$, for any $E \in \Coh(X)$, 
$H^{i}_{\Coh(Y)}(\Psi (E)) = 0$ when $i \not \in \{0,1,2,3\}$. 

We can visualize $\bB^{\SX}$ and $\bB^{\SY}$ as follows:
$$
\begin{tikzpicture}[scale=1.2]
\draw[style=dashed] (0.5,0) grid (3.5,1);
\fill[lightgray] (1,0) -- (2,0)  to[out=25,in=-115] (3,1) -- (2,1) to[out=-115,in=25] (1,0);
\draw[style=thick] (1,0) -- (2,0) to[out=25,in=-115] (3,1)-- (2,1) to[out=-115,in=25] (1,0);
\draw[style=thick] (2,0) -- (2,1);
\draw (1.75,0.25) node {$\scriptscriptstyle B$};
\draw (2.3,0.7) node {$\scriptscriptstyle A$};
\draw (1.5,- 0.3) node {$\scriptstyle{-1}$};
\draw (2.5,-0.3) node {$\scriptstyle{0}$};
\draw (-1.2,0.5) node {$\bB^{\SX} = \langle \mathcal{F}^{\SX}_1[1] , \mathcal{T}^{\SX}_1 \rangle : $};
\draw (5,0.5) node {$\scriptstyle A \in \mathcal{T}^{\SX}_1,  \ \ B \in \mathcal{F}^{\SX}_1$ };
\end{tikzpicture}
$$
$$
\begin{tikzpicture}[scale=1.2]
\draw[style=dashed] (0.5,0) grid (3.5,1);
\fill[lightgray] (1,0) -- (2,0) to[out=65,in=-155] (3,1) -- (2,1) to[out=-155,in=65] (1,0);
\draw[style=thick] (1,0) -- (2,0) to[out=65,in=-155] (3,1)-- (2,1) to[out=-155,in=65] (1,0);
\draw[style=thick] (2,0) -- (2,1);
\draw (1.6,0.4) node {$\scriptscriptstyle D$};
\draw (2.25,0.75) node {$\scriptscriptstyle C$};
\draw (1.5,- 0.3) node {$\scriptstyle{-1}$};
\draw (2.5,-0.3) node {$\scriptstyle{0}$};
\draw (-1.2,0.5) node {$\bB^{\SY} = \langle \mathcal{F}^{\SY}_1[1] , \mathcal{T}^{\SY}_1\rangle : $};
\draw (5,0.5) node {$\scriptstyle C \in \mathcal{T}^{\SY}_1, \ \ D \in \mathcal{F}^{\SY}_1$ };
\end{tikzpicture}
$$

Following similar arguments in \cite{MP1, MP2, Piy}, one can prove 
\begin{equation*}
\left.\begin{aligned}
 & H^0_{\Coh(Y)} ( \Psi(\fF^{\SX}_1)) \subset \fF^{\SY}_1, \ \ H^3_{\Coh(Y)} (\Psi(\fF^{\SX}_1)) \subset \tT^{\SY}_1\\        
 &  H^3_{\Coh(Y)} (\Psi(\tT^{\SX}_1)) = \{0\}, \ \ H^2_{\Coh(Y)} (\Psi(\tT^{\SX}_1)) \subset \tT^{\SY}_1
       \end{aligned}
  \ \right\}.
\end{equation*}
That is
$\Psi\left( \fF^{\SX}_1 \right)[1] \subset \langle \bB^{\SY} ,\bB^{\SY}[-1], \bB^{\SY}[-2] \rangle$,
$$
\begin{tikzpicture}[scale=1.2]
\draw[style=dashed] (0.5,-2) grid (6.5,-1);
\fill[lightgray] (1,-2) -- (4,-2) to[out=65,in=-155] (5,-1) -- (2,-1) to[out=-155,in=65] (1,-2);
\draw[style=thick] (1,-2) -- (4,-2) to[out=65,in=-155] (5,-1) -- (2,-1) to[out=-155,in=65] (1,-2);
\draw[style=dashed] (2,-2) -- (2,-1) ;
\draw[style=dashed] (3,-2) -- (3,-1) ;
\draw[style=dashed] (4,-2) -- (4,-1) ;
\draw (1.6,-1.7) node {$\scriptscriptstyle B_{\Psi}^0$};
\draw (2.5,-1.5) node {$\scriptscriptstyle B_{\Psi}^1$};
\draw (3.5,-1.5) node {$\scriptscriptstyle B_{\Psi}^2$};
\draw (3.5,-2.8) node {$\scriptstyle B_{\Psi}^i = H^i_{\Coh(Y)}(\Psi(B))$};
\draw (4.25,-1.25) node {$\scriptscriptstyle B_{\Psi}^3$};
\draw (1.5,- 2.3) node {$\scriptscriptstyle{-1}$};
\draw (2.5,-2.3) node {$\scriptscriptstyle{0}$};
\draw (3.5,- 2.3) node {$\scriptscriptstyle{1}$};
\draw (4.5,-2.3) node {$\scriptscriptstyle{2}$};
\draw (5.5,-2.3) node {$\scriptscriptstyle{3}$};
\draw[style=dashed] (-3 ,-2) grid (-1,-1);
\fill[lightgray] (-3,-2) -- (-2,-2) -- (-2,-1) to[out=-115,in=25] (-3,-2);
\draw[style=thick] (-3,-2) -- (-2, -2) to[out=25,in=-115] (-1, -1) -- (-2, -1) to[out=-115,in=25] (-3, -2);
\draw[style=thick] (-2,-2) -- (-2, -1);
\draw (-2.25,-1.75) node {$\scriptscriptstyle B$};
\draw (-2.5,- 2.3) node {$\scriptscriptstyle{-1}$};
\draw (-1.5,-2.3) node {$\scriptscriptstyle{0}$};
\draw (-3.2,-2.1) to [out=120,in=240] (-3.2,-0.9);
\draw (-0.8,-2.1) to [out=60,in=300] (-0.8,-0.9);
\draw (-3.7,-1.5) node {$\Psi$};
\draw (0,-1.5) node {$=$};
\end{tikzpicture}
$$
and $\Psi \left( \tT^{\SX}_1 \right) \subset \langle \bB^{\SY}, \bB^{\SY}[-1], \bB^{\SY}[-2] \rangle$.
$$
\begin{tikzpicture}[scale=1.2]
\draw[style=dashed] (0.5,0) grid (6.5,1);
\fill[lightgray] (2,0) -- (4,0)to[out=65,in=-155] (5,1) -- (2,1) --(2,0);
\draw[style=thick] (2,0) -- (4,0)  to[out=65,in=-155]  (5,1) -- (2,1) -- (2,0);
\draw[style=dashed] (3,0) -- (3,1) ;
\draw[style=dashed] (4,0) -- (4,1) ;
\draw (2.5,0.5) node {$\scriptscriptstyle A_{\Psi}^0$};
\draw (3.5,0.5) node {$\scriptscriptstyle A_{\Psi}^1$};
\draw (3.5,-0.8) node {$\scriptstyle A_{\Psi}^i = H^i_{\Coh(Y)}(\Psi(A))$};
\draw (4.25,0.75) node {$\scriptscriptstyle A_{\Psi}^2$};
\draw (1.5,- 0.3) node {$\scriptscriptstyle{-1}$};
\draw (2.5,-0.3) node {$\scriptscriptstyle{0}$};
\draw (3.5,- 0.3) node {$\scriptscriptstyle{1}$};
\draw (4.5,-0.3) node {$\scriptscriptstyle{2}$};
\draw (5.5,-0.3) node {$\scriptscriptstyle{3}$};
\draw[style=dashed] (-3 ,0) grid (-1,1);
\fill[lightgray] (-2,0) to[out=25,in=-115] (-1,1) -- (-2,1) -- (-2,0);
\draw[style=thick] (-3,0) -- (-2, 0) to[out=25,in=-115]  (-1, 1) -- (-2, 1) to[out=-115,in=25]  (-3, 0);
\draw[style=thick] (-2,0) -- (-2, 1);
\draw (-1.7,0.7) node {$\scriptscriptstyle A$};
\draw (-2.5,- 0.3) node {$\scriptscriptstyle{-1}$};
\draw (-1.5,-0.3) node {$\scriptscriptstyle{0}$};
\draw (-3.2,-0.1) to [out=120,in=240] (-3.2,1.1);
\draw (-0.8,-0.1) to [out=60,in=300] (-0.8,1.1);
\draw (-3.7,0.5) node {$\Psi$};
\draw (0,0.5) node {$=$};
\end{tikzpicture}
$$
Hence, 
$$
\Psi \left( \bB^{\SX} \right) \subset \langle \bB^{\SY}, \bB^{\SY}[-1], \bB^{\SY}[-2] \rangle.
$$ 
Similarly, one can prove 
$$
\Psi[1] (\bB^{\SY}) \subset \langle \bB^{\SX} , \bB^{\SX}[-1], \bB^{\SX}[-2]\rangle.
$$
Therefore, under the Fourier-Mukai transforms $\Psi$ and $\HPsi[1]$, the images of the complexes in the subcategories $\bB^{\SX}$ and $\bB^{\SY}$ behave somewhat like the images of sheaves on abelian surfaces under the  Fourier-Mukai transform (see \eqref{imagesurface}). Also as similar to \cite{MP1, MP2, Piy}, one can prove 
\begin{equation*}
\left.\begin{aligned}
 & \Psi (\tT^{\SX}_2) \subset \langle \fF^{\SY}_2 , \tT^{\SY}_2[-1]  \rangle \\        
 &  \Psi (\fF^{\SX}_2) \subset \langle \fF^{\SY}_2[-1] , \tT^{\SY}_2[-2]  \rangle
       \end{aligned}
  \ \right\},
\end{equation*}
and similar relations for $\HPsi[1]$. 
Since $\aA^{\SX} = \langle \fF^{\SX}_2[1] , \tT^{\SX}_2 \rangle$ and $\aA^{\SY} = \langle \fF^{\SY}_2[1] , \tT^{\SY}_2 \rangle$
we have $\Psi [1] (\aA^{\SX} ) = \aA^{\SY}$
as required. 

Consequently, by similar arguments as in  \cite{MP1, MP2, Piy}, one can prove the strong Bogomolov-Gieseker type inequality introduced in \cite{BMT} holds for $X$ and also $Y$.

\begin{thm}[\cite{Piy2}]
\label{equivalence3fold}
Conjecture \ref{conjequivalence} is true for abelian 3-folds. Moreover, the strong Bogomolov-Gieseker type inequality introduced in \cite{BMT} holds for abelian 3-folds.
\end{thm}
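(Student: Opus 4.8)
The plan is to reduce the three-fold statement to the surface statement already established, by two successive tilts, using the anti-diagonal form of the cohomological transform (Theorem \ref{antidiagonalrep}) and the central-charge identity of Lemma \ref{FMTact} to control how slopes are permuted. I first note that it suffices to treat $k=1$: the whole construction is symmetric under interchanging $(X,\Psi)$ with $(Y,\HPsi)$, and $\HPsi\circ\Psi\cong[-3]$ gives $\Psi^{-1}\cong\HPsi[3]$, so the $k=1$ equivalence $\Psi[1](\aA^{\SX}_{\Omega_1})=\aA^{\SY}_{\Omega'_1}$ for both transforms immediately yields the $k=2$ equivalence for $\Psi$. I therefore fix $\Omega=-D_{\SX}+\lambda e^{i\pi/3}\lx$, its induced partner $\Omega'$, and the torsion pairs $(\tT^{\SX}_1,\fF^{\SX}_1)$ and $(\tT^{\SX}_2,\fF^{\SX}_2)$ cut out by $\sigma_1$ and $\sigma_2$, with their counterparts on $Y$.

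The first reduction is $\Coh\rightsquigarrow\bB$. Using $H^i_{\Coh(Y)}(\Psi(E))=0$ for $E\in\Coh(X)$ and $i\notin\{0,1,2,3\}$, the task is to locate the two extreme cohomology sheaves of $\Psi(E)$. I would prove
\begin{equation*}
\left.\begin{aligned}
& H^0_{\Coh(Y)}(\Psi(\fF^{\SX}_1))\subset\fF^{\SY}_1, \qquad H^3_{\Coh(Y)}(\Psi(\fF^{\SX}_1))\subset\tT^{\SY}_1, \\
& H^3_{\Coh(Y)}(\Psi(\tT^{\SX}_1))=\{0\}, \qquad H^2_{\Coh(Y)}(\Psi(\tT^{\SX}_1))\subset\tT^{\SY}_1
\end{aligned}\ \right\}
\end{equation*}
from three ingredients: the $\mu$-slope computation on $Y$ obtained from the anti-diagonal transform, which forces the semistable pieces of each $\Psi^i(E)$ onto the correct side of the wall defining $\fF^{\SY}_1$ and $\tT^{\SY}_1$; preservation of $\mu$-semistability of the leading term under $\Psi$; and a vanishing for the top cohomology coming from Serre duality and the adjunction between $\Psi$ and $\HPsi$. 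Plugging these memberships into the truncation triangles and using $\tT^{\SY}_1\subset\bB^{\SY}$ and $\fF^{\SY}_1\subset\bB^{\SY}[-1]$ gives $\Psi(\tT^{\SX}_1)\subset\langle\bB^{\SY},\bB^{\SY}[-1],\bB^{\SY}[-2]\rangle$ and $\Psi(\fF^{\SX}_1)[1]\subset\langle\bB^{\SY},\bB^{\SY}[-1],\bB^{\SY}[-2]\rangle$, hence $\Psi(\bB^{\SX})\subset\langle\bB^{\SY},\bB^{\SY}[-1],\bB^{\SY}[-2]\rangle$, and symmetrically for $\HPsi[1]$.

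The second reduction is $\bB\rightsquigarrow\aA$. The previous step shows that the pair $(\bB^{\SX},\bB^{\SY})$ with the functors $\Psi$ and $\HPsi[1]$ behaves exactly like $(\Coh(X),\Coh(Y))$ with $\Psi$ and $\HPsi$ in the surface case: each image lands in a three-term window, and $\HPsi[1]\circ\Psi\cong[-2]$ furnishes a convergent spectral sequence analogous to \eqref{mukaispecseq} built from the $\bB$-cohomology functors. I would therefore rerun the surface proposition verbatim, with $\bB$ in place of $\Coh$ and the tilt-stability $\sigma_2$ in place of $\sigma_1$, to obtain $\Psi(\tT^{\SX}_2)\subset\langle\fF^{\SY}_2,\tT^{\SY}_2[-1]\rangle$ and $\Psi(\fF^{\SX}_2)\subset\langle\fF^{\SY}_2[-1],\tT^{\SY}_2[-2]\rangle$; since $\aA^{\SX}=\langle\fF^{\SX}_2[1],\tT^{\SX}_2\rangle$ and likewise on $Y$, these yield $\Psi[1](\aA^{\SX})=\aA^{\SY}$, which is Conjecture \ref{conjequivalence} for three-folds. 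I expect the \emph{main obstacle} to be exactly this step: transporting $\sigma_2$-(tilt-)stability through $\Psi$ is much more delicate than transporting $\mu$-stability, as it involves $\ch_2$ and so genuinely requires the strong Bogomolov--Gieseker inequality both to know that $\sigma_2$ is a well-defined very weak stability condition and to prevent the cohomology objects $\Psi^i$ from leaving their predicted $\tT_2$ and $\fF_2$ classes.

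Finally I would read off the strong Bogomolov--Gieseker inequality, following \cite{MP1,MP2,Piy}. For a $\sigma_2$-semistable $E\in\bB^{\SX}$ on the vertical wall $\Ree Z^{(2)}_{\Omega}(E)=0$, the heart equivalence forces $\Psi(E)$ to be, up to shift, a single $\mu$-semistable sheaf on $Y$; the ordinary Bogomolov inequality for that sheaf on the abelian three-fold, pushed back through the anti-diagonal cohomological transform of Theorem \ref{antidiagonalrep}, is precisely $\ch^B_3(E)\le\frac{\omega^2}{18}\ch^B_1(E)$. This gives \eqref{BGineq} on $X$, and the symmetric argument for $\HPsi$ gives it on $Y$, completing the proof.
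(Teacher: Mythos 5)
Your two-step skeleton is the paper's: you prove exactly the four memberships $H^0_{\Coh(Y)}(\Psi(\fF^{\SX}_1))\subset\fF^{\SY}_1$, $H^3_{\Coh(Y)}(\Psi(\fF^{\SX}_1))\subset\tT^{\SY}_1$, $H^3_{\Coh(Y)}(\Psi(\tT^{\SX}_1))=\{0\}$, $H^2_{\Coh(Y)}(\Psi(\tT^{\SX}_1))\subset\tT^{\SY}_1$ to get $\Psi(\bB^{\SX})\subset\langle\bB^{\SY},\bB^{\SY}[-1],\bB^{\SY}[-2]\rangle$, then rerun the surface argument at the level of $(\bB,\sigma_2)$ to get $\Psi[1](\aA^{\SX})=\aA^{\SY}$; your symmetry remark for $k=2$ via $\HPsi$ makes explicit something the paper leaves implicit. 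But your deduction of the Bogomolov--Gieseker inequality is wrong. The heart equivalence does not force $\Psi(E)$ to be, up to shift, a single $\mu$-semistable sheaf on $Y$: for $E$ tilt-stable with $\nu_{\Omega}(E)=0$ one has $E\in\pP_{\sigma_2}(1/2)\subset\fF^{\SX}_2$, hence $E[1]\in\aA^{\SX}$ and therefore $\Psi(E)[2]\in\aA^{\SY}$ --- equivalently, with more work as in \cite{MP1,MP2,Piy}, $\Psi(E)[1]$ is $\sigma_2$-semistable in $\bB^{\SY}$ with $\nu_{\Omega'}=0$ --- and such objects are in general honest two-term complexes, not sheaves. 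Moreover the classical Bogomolov inequality is quadratic in the Chern character, while \eqref{BGineq} is linear (given the wall condition $\Ree Z^{(2)}_{\Omega}(E)=0$), so pushing the former through the linear anti-diagonal transform cannot ``precisely'' yield the latter. The actual mechanism is purely linear: every object of $\aA^{\SY}=\langle\fF^{\SY}_2[1],\tT^{\SY}_2\rangle$ satisfies $\Ree Z^{(2)}_{\Omega'}\le 0$, so the heart equivalence gives $\Ree Z^{(2)}_{\Omega'}(\Psi(E))\le 0$, and rewriting this via Theorem \ref{antidiagonalrep} (which exchanges $\ch_0,\ch_2$ of $\Psi(E)$ with $\ch_3,\ch_1$ of $E$, up to twist and sign) is exactly \eqref{BGineq} for $E$. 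The inequality comes from the positivity built into the double-tilted heart on $Y$, not from Bogomolov on $Y$.

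There is also a logical-order problem in your ``main obstacle'' remark that would make the whole plan circular. That $\sigma_2$ is a well-defined very weak stability condition is unconditional: it is the result of \cite{BMT} recalled in Remark \ref{remarkconjstab}, and it uses only the classical Bogomolov inequality for slope-stable sheaves, not the strong inequality \eqref{BGineq}. More importantly, the heart equivalence in your second step must be proved \emph{without} assuming \eqref{BGineq} --- in \cite{MP1,MP2,Piy} this is done by Fourier--Mukai and spectral-sequence arguments transporting $\sigma_2$-stability directly --- precisely because \eqref{BGineq} is to be \emph{deduced} from that equivalence. As written, your proposal uses the strong Bogomolov--Gieseker inequality as an input to the step from which it is later derived as output.
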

\section*{Acknowledgements}
The author is grateful to Arend Bayer, Tom Bridgeland, Antony Maciocia and Yukinobu Toda for very useful discussions relating to this work, and also to the organizers of the Kinosaki Symposium on Algebraic Geometry 2015. Special thanks go to Antony Maciocia for his guidance given to the author's doctoral studies.
This work is supported by the World Premier International Research Center Initiative (WPI Initiative), MEXT, Japan.

\providecommand{\bysame}{\leavevmode\hbox to3em{\hrulefill}\thinspace}
\providecommand{\MR}{\relax\ifhmode\unskip\space\fi MR }
\providecommand{\MRhref}[2]{%
  \href{http://www.ams.org/mathscinet-getitem?mr=#1}{#2}
}
\providecommand{\href}[2]{#2}


\begin{thebibliography}{BMT}

\bibitem[AB]{AB}
D.~Arcara and A.~Bertram, \emph{Bridgeland-stable moduli spaces for {K}-trivial
  surfaces. {W}ith an appendix by {M}ax {L}ieblich}, J.~Eur.~Math.~Soc.~
  {15} (2013), 1--38.

\bibitem[BMS]{BMS}
A.~Bayer, E.~Macr\`{i}, and P.~Stellari, \emph{The space of stability
  conditions on abelian threefolds, and on some {C}alabi-{Y}au threefolds},
  preprint, arXiv:1410.1585.

\bibitem[BMT]{BMT}
A.~Bayer, E.~Macr\`{i}, and Y.~Toda, \emph{Bridgeland stability conditions on
  3-folds {I}: {B}ogomolov-{G}ieseker type inequalities}, J.~Algebraic Geom.~
  {23} (2014), 117--163.

\bibitem[BL]{BL1}  
C.~ Birkenhake, H.~Lange, \emph{The dual polarization of an abelian variety},
Arch. Math. (Basel) 73 (1999), no. 5, 380\,--\,389.

\bibitem[Bri1]{BriStab}
T.~Bridgeland, \emph{Stability conditions on triangulated categories}, Ann.~of
  Math {166} (2007), 317--345.

\bibitem[Bri2]{BriK3}
\bysame, \emph{Stability conditions on ${K}3$ surfaces}, Duke Math.~J.~
  {141} (2008), 241--291.
  
\bibitem[BM]{BM}
 T.~Bridgeland, A.~Maciocia, \emph{Fourier-Mukai transforms for $K3$ and elliptic fibrations},
J. Algebraic Geom. 11 (2002), no. 4, 629\,--\,657.

\bibitem[CW]{CW}  
A.~C\u{a}ld\u{a}raru, S.~Willerton,  \emph{The Mukai pairing, I: A categorical approach},  New York J. Math. 16 (2010), 61\,--\,98.

\bibitem[Huy1]{Huy1}  
D.~Huybrechts, \emph{Fourier-Mukai transforms in algebraic geometry}, Oxford Mathematical Monographs, The Clarendon Press, Oxford University Press, Oxford, 2006.

\bibitem[Huy2]{Huy2}
\bysame, \emph{Derived and abelian equivalence of {$K3$} surfaces}, J.
  Algebraic Geom. {17} (2008), no.~2, 375--400.

\bibitem[Li]{Li}
C.~Li, \emph{Stability conditions on Fano threefolds of Picard number one}, preprint, arXiv:1510.04089.

\bibitem[Muk1]{Muk1}  
S.~Mukai, \emph{Semi-homogeneous vector bundles on an Abelian variety},  J. Math. Kyoto Univ. 18 (1978), no. 2, 239\,--\,272.

\bibitem[Muk2]{Muk2}  
S.~Mukai, \emph{Duality between $D(X)$ and $D(\widehat{X})$ with its application to Picard sheaves}, Nagoya Math. J. 81 (1981), 153\,--\,175.

\bibitem[Mac]{Mac}
E.~Macr\`{i}, \emph{A generalized {B}ogomolov-{G}ieseker inequality for the
  three-dimensional projective space}, Algebra Number Theory {8} (2014),
  173--190.

\bibitem[MP1]{MP1}
A.~Maciocia and D.~Piyaratne, \emph{Fourier-Mukai Transforms and Bridgeland Stability Conditions on Abelian Threefolds}, 	  	Algebr. Geom. 2 (2015), no. 3, 270\,--\,297.

\bibitem[MP2]{MP2}
\bysame, \emph{Fourier-{M}ukai {T}ransforms and {B}ridgeland {S}tability
  {C}onditions on {A}belian {T}hreefolds {I}{I}}, preprint, arXiv:1310.0299v2.

\bibitem[Orl]{Orl} 
D.~Orlov, \emph{Derived categories of coherent sheaves on abelian varieties and equivalences between them}, (Russian)  Izv. Ross. Akad. Nauk Ser. Mat.  66  (2002),  no. 3, 131\,--\,158; translation in  Izv. Math.  66  (2002),  no. 3, 569\,--\,594.


\bibitem[Piy1]{Piy}
D.~Piyaratne, \emph{Fourier-Mukai {T}ransforms and {S}tability {C}onditions
  on {A}belian {T}hreefolds}, PhD thesis, University of Edinburgh (2014),
  \url{http://hdl.handle.net/1842/9635}.
  
\bibitem[Piy2]{Piy2}
\bysame, \emph{TBA}, in preparation.
  
\bibitem[PT]{PT}
D.~Piyaratne, Y.~Toda, \emph{Moduli of Bridgeland semistable objects on 3-folds and Donaldson-Thomas invariants}, preprint, arXiv:1504.01177.

\bibitem[Sch]{Sch}
B.~Schmidt, \emph{A generalized {B}ogomolov-{G}ieseker inequality for the
  smooth quadric threefold}, Bull.~Lond.~Math.~Soc.~ {46} (2014),
  915--923.

\bibitem[Tod1]{TodLimit} 
Y.~Toda, \emph{Limit stable objects on Calabi-Yau 3-folds},
Duke Math. J. 149 (2009), no. 1, 157\,--\,208.

\bibitem[Tod2]{Tcurve1}
\bysame,  \emph{Curve counting theories via stable objects~{I}: {DT/PT}
  correspondence}, J.~Amer.~Math.~Soc.~ {23} (2010), 1119--1157.

\bibitem[Yos]{Yos}  
K.~Yoshioka, \emph{Stability and the Fourier-Mukai transform II}, Compos. Math. 145 (2009), no. 1, 112\,--\,142.

\end{thebibliography}
\end{document}